\documentclass[11pt]{amsart}

\usepackage{graphicx}
\usepackage{mathrsfs}
\usepackage{xcolor}
\usepackage{amssymb}
\usepackage{amsmath}
\usepackage{mathtools}
\usepackage{booktabs}
\usepackage{enumitem}
\usepackage[a4paper,margin=1.4in]{geometry}
\usepackage[hyperindex,breaklinks,colorlinks=true,
  linkcolor=blue!45!black,citecolor=blue!45!black,
  urlcolor=blue!55!black]{hyperref}

\numberwithin{equation}{section}

\newtheorem{prop}{Proposition}[section]
\newtheorem{theo}[prop]{Theorem}
\newtheorem{lemm}[prop]{Lemma}

\newtheorem{rema}[prop]{Remark}

\theoremstyle{definition}
\newtheorem{defi}[prop]{Definition}

\newcommand{\cL}{\mathcal L}
\newcommand{\cM}{\mathcal M}
\newcommand{\cU}{\mathcal U}

\newcommand{\bR}{\mathbb{R}}
\newcommand{\bS}{\mathbb{S}}

\newcommand{\dd}{\,\mathrm d}
\newcommand{\tr}{\operatorname{tr}}
\newcommand{\Div}{\operatorname{div}}

\newcommand{\ind}{\operatorname{ind}}
\newcommand{\Sing}{\operatorname{Sing}}
\newcommand{\Span}{\operatorname{span}}
\newcommand{\Id}{\operatorname{Id}}
\newcommand{\Rea}{\operatorname{Re}}

\newcommand{\abs}[1]{\lvert#1\rvert}
\newcommand{\inner}[2]{\left\langle #1,#2\right\rangle}
\newcommand{\Dbar}{\overline{\mathbb D}}
\newcommand{\Wulff}{\mathcal W_{F}}
\newcommand{\WulffR}[2]{\mathcal W_{#1}(#2)}
\newcommand{\halfball}[1]{B^{+}_{#1}}

\title[Hopf-Nitsche-type theorem]
{Hopf-Nitsche-type theorem for \\anisotropic capillary disks in a half $3$-space}

\author{Chenghang Lu}
\address{(C.L.)Albert-Ludwigs-Universit\"at Freiburg,
Mathematisches Institut, Ernst-Zermelo-Str.~1,
D-79104 Freiburg, Germany}
\email{chenghang.lu@math.uni-freiburg.de}
\thanks{C.L. is supported by  DFG,  through
IRTG 3132 (Grant No.~545046569).}

\author{Chao Xia}
\address{(C.X.) School of Mathematical Sciences,
Xiamen University, 361005, Xiamen, P.~R.~China}
\email{chaoxia@xmu.edu.cn}
\thanks{C.X. is supported by NSFC (Grant No. 12271449, 12671069, 12526203, 12526102) and the Natural Science Foundation of Fujian Province of China (Grant No. 2024J011008).}

\subjclass[2020]{53A10, 53C42, 35J67}
\keywords{anisotropic mean curvature, capillary surface, Hopf's theorem, Wulff shape 
}

\begin{document}
\raggedbottom

\begin{abstract}
In this paper, we prove that any compact immersed disk in
a half $3$-space that has constant anisotropic mean curvature and
satisfies the anisotropic capillary boundary condition is a homothetic Wulff cap.
\end{abstract}

\maketitle

\section{Introduction}

Hopf's theorem is a rigidity result for constant
mean curvature (CMC) surfaces, stating that every immersed CMC $2$-sphere in $\bR^3$ is a round sphere \cite{Hopf1983}. Its classical proof uses
the holomorphic Hopf differential. On the other hand, Wente \cite{Wente1986} has constructed immersed CMC $2$-torus in $\bR^3$. In fact, there exist immersed closed CMC surfaces of any given genus in $\bR^3$, see \cite{Kapouleas1991}. 
Nitsche \cite{Nitsche1985} considered the corresponding boundary version of Hopf's theorem about free boundary CMC disks
in a Euclidean ball and proved that every such disk is totally umbilic, and hence is a planar disk or a spherical cap. The free boundary
condition and the Terquem--Joachimsthal theorem make the boundary a line
of curvature, allowing Hopf's method  to work. The same mechanism underlies capillary boundary case and also
the classical planar-support version, see \cite{RosSouam97}. There are also other immersed free boundary or capillary CMC surfaces of other topology, see for example \cite{Wente1995}. 

Let $F:\bS^n\to(0,\infty)$ be a uniformly
elliptic integrand, i.e.
\begin{equation}\label{eq:strict-ellipticity}
 A_F(\xi):=D^2 F(\xi)+F(\xi)Id>0, \quad \xi\in \bS^n,
\end{equation}
 and \[
 \Phi:\bS^n\to \bR^{n+1}:\quad \Phi(\xi)=DF(\xi)+F(\xi)\xi
\] be the Cahn--Hoffman map. The image $\mathcal W_F=\Phi(\bS^n)$ is called the Wulff shape.
 For an immersed hypersurface $X:\Sigma^n\to \bR^{n+1}$, its anisotropic area is given by 
$$\mathcal{A}_F(X)=\int_\Sigma F(\nu) dA$$ where $\nu$ is a unit normal of $\Sigma$. Denote the anisotropic normal by  $\nu_F=\Phi(\nu)$.
The first variation of the anisotropic area gives rise to the anisotropic mean curvature 
$$H_F=\tr(d\nu_F).$$
 Koiso and Palmer \cite{KoisoPalmer2010} proved anisotropic analogue of Hopf's theorem saying that every immersed constant anisotropic mean curvature (CAMC) topological
$2$-sphere in $\bR^3$ is, up to translation and homothety, the Wulff shape. In contrast with the isotropic case, the classical Hopf differential is generally not holomorphic.  Koiso and Palmer \cite{KoisoPalmer2010} found an alternative approach to this problem by proving via Bers’ local elliptic expansion that each isolated anisotropic umbilic has negative index so as to apply the Poincaré–Hopf theorem. This approach can be applied in a much more general framework, see for example G\'alvez and Mira \cite{GalvezMira2020}.

In this paper, we consider Hopf-Nitsche-type problem in anisotropic setting. Let $$\mathbb D^2=\{x\in \bR^2: |x|<1\}$$ and
\[
 \mathbb R^3_+:=\{x\in\mathbb R^3:\inner{x}{E_3}>0\},
 \qquad E_3=(0,0,1).
\]
Let $X:\overline{\mathbb D^2}\to\overline{\mathbb R^3_+}$ be an oriented proper immersed disk. Here proper immersion means that $X(\mathbb D)\subset\mathbb R^3_+$ and
       $X(\partial\mathbb D)\subset\partial\mathbb R^3_+$. 
  The anisotropic
capillary energy is
\[
 \mathcal E_F(X)
 =\int_{\mathbb D^2} F(\nu)\dd A+\omega_0\,\mathcal A_W(X),
\]
where $\mathcal A_W(X)$ is the signed wetting area in the support plane.
The critical surfaces are of constant anisotropic mean curvature in
the interior and satisfies the Young boundary condition or the anisotropic capillary boundary condition
\[
 \inner{\nu_F}{-E_3}=\omega_0
 \quad\text{on }X(\partial \mathbb D).
\]
Translations and homotheties of $\mathcal W_F$ satisfying this boundary
condition are cut by the support plane into Wulff caps, or Winterbottom shapes, which are the corresponding
energy-minimizing configuration with fixed volume constraint \cite{Winterbottom1967}.

Our main result is the following Hopf-Nitsche-type theorem.

\begin{theo}\label{thm:main}
 Suppose that $F\in C^{4,\alpha}(\bS^2), 0<\alpha<1$, is a
positive uniformly elliptic integrand.
Let $X:\overline{\mathbb D^2}\to\overline{\mathbb R^3_+}$ be an oriented
$C^{4,\alpha}$ proper immersed disk with constant anisotropic mean curvature and satisfying the anisotropic capillary boundary condition. Then $X$ is a Wulff cap, up to translation and homothety. 
\end{theo}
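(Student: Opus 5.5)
We follow the Koiso--Palmer strategy and adapt it to a disk with boundary: we build a line field from anisotropic principal directions, control its singularities in the interior and on the boundary, and derive a contradiction with a Poincaré--Hopf count unless the disk is totally anisotropically umbilic. Write $S_F:=d\nu_F = A_F(\nu)\,d\nu$ for the anisotropic shape operator. It is self-adjoint with respect to the metric $\langle A_F(\nu)^{-1}\cdot,\cdot\rangle$, so it has real eigenvalues $\kappa^F_1\le \kappa^F_2$ and eigen-line fields. These are well defined away from the anisotropic umbilic set $\mathcal U=\{\kappa^F_1=\kappa^F_2\}$. If $\mathcal U$ is all of $\overline{\mathbb D^2}$, then $d\nu_F=\tfrac{H_F}{2}\,dX$. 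When $H_F\neq 0$ this says $X+\tfrac{2}{H_F}\nu_F$ is constant, so $X$ lies on a translated and rescaled copy of $\mathcal W_F$. The capillary condition then identifies the piece as a Wulff cap. The case $H_F=0$ would make $\nu_F$ constant, hence $X$ planar, which is impossible for a proper disk meeting the plane only along $\partial\mathbb D$ with $\inner{\nu_F}{-E_3}=\omega_0$, unless it is a degenerate cap; we exclude or absorb this case separately. So it suffices to prove $\mathcal U=\overline{\mathbb D^2}$.

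\emph{Interior umbilics.} We use the Koiso--Palmer analysis \cite{KoisoPalmer2010}. For CAMC surfaces, the trace-free part of $S_F$ satisfies, in suitable conformal coordinates for the metric induced by $A_F(\nu)^{-1}$, a first-order elliptic system of Bers--Vekua type. Bers' local expansion then shows that either $\mathcal U$ has interior accumulation points, in which case unique continuation forces $\mathcal U=\overline{\mathbb D^2}$, or interior umbilics are isolated with strictly negative index of the principal line field. The regularity $C^{4,\alpha}$ is needed so that the coefficients of this system are $C^{1,\alpha}$ and the expansion applies.

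\emph{Boundary.} This is the step we expect to be the main obstacle: we must show that $\partial\mathbb D$ is a line of anisotropic curvature, the analogue of Terquem--Joachimsthal. Let $\tau$ be the unit tangent of $X(\partial\mathbb D)$. Differentiating the boundary condition along $\tau$ gives $\inner{d\nu_F(\tau)}{E_3}=0$. We then need to check that $d\nu_F(\tau)$ is proportional to $\tau$. Because $\tau$ is horizontal and tangent to $\Sigma$, and $d\nu_F(\tau)$ is also tangent to $\Sigma$, a tangent vector of $\Sigma$ orthogonal to $E_3$ is a multiple of $\tau$ provided $\Sigma$ is transverse to the plane. Transversality follows from $|\omega_0|<F(E_3)$-type conditions, which are implicit in admissibility. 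Hence $\tau$ is an eigenvector of $S_F$ along the boundary. Consequently, along $\partial\mathbb D$ the principal line field is either tangent to the boundary or has boundary umbilics. At a boundary umbilic we reflect the trace-free tensor across $\partial\mathbb D$, using the eigenvector property as a Neumann/Dirichlet-type condition on the Vekua system. We then check that the boundary umbilic is either isolated with negative half-index or the whole boundary is umbilic, in which case unique continuation again gives total umbilicity.

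\emph{Conclusion.} Suppose the umbilics are isolated. Double the disk to a sphere, or equivalently apply the Poincaré--Hopf theorem for line fields on a disk tangent to the boundary. This gives
\[
2\sum_{\text{interior}}\ind+\sum_{\text{boundary}}\ind = 2\chi(\mathbb D^2)=2>0 .
\]
Every term on the left is negative, so there are no isolated umbilics at all, and then the sum would have to be $0$, which contradicts $2$. Hence $\mathcal U=\overline{\mathbb D^2}$, and the first paragraph finishes the proof.
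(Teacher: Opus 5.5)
\paragraph{Overall verdict.}
Your global scheme is the paper's: anisotropic Joachimsthal, doubling the parameter disk, and Poincar\'e--Hopf on the sphere with only negative local indices. Your local analysis takes a genuinely different route, and as written its load-bearing claim is unsupported.

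\paragraph{The Vekua claim needs its own proof.}
Koiso--Palmer do not show that $\mathring h_F$ satisfies a Bers--Vekua system. As the paper describes, they (and the paper) compare with the tangent Wulff shape $\WulffR{r_0}{c_p}$ and apply Bers' expansion for \emph{second-order} equations to the graph difference $w=u-\widetilde u$, because the classical Hopf differential is not holomorphic. Your one sentence carries both your interior and your boundary analysis, so you must prove it. The claim is true:
\begin{itemize}
\item $\nu_F$ is transversal, since $\inner{\nu_F}{\nu}=F(\nu)>0$, and $d\nu_F$ is tangent. So $D_UV=\nabla_UV+h(U,V)\nu_F$ defines a torsion-free connection $\nabla$.
\item Flatness of $D$ applied to $\nu_F$ gives $(\nabla_US_F)V=(\nabla_VS_F)U$. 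When $H_F$ is constant the same holds for $\mathring S_F=S_F-\frac{H_F}{2}\Id$.
\item Write $\nabla=\nabla^{g_F}+K$. The symmetric $g_F$-trace-free form $\mathring h_F=g_F(\mathring S_F\cdot,\cdot)$ is then $\nabla^{g_F}$-Codazzi up to terms linear in $\mathring h_F$. Hence $\operatorname{div}_{g_F}\mathring h_F=O(\abs{\mathring h_F})$, i.e.\ $\abs{\partial_{\bar z}\phi}\le C\abs{\phi}$ for $\mathring h_F=2\Rea(\phi\,dz^2)$ in $g_F$-isothermal coordinates.
\item The similarity principle gives $\phi=e^{s}f$ with $f$ holomorphic. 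Either $\phi\equiv0$ (and then everywhere, by unique continuation), or the zeros are isolated of order $k\ge1$ and the principal line fields have index $-k/2$.
\end{itemize}

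\paragraph{The boundary step.}
With this in hand, the boundary is easier than you expect. The real difficulty is not Joachimsthal, which takes three lines (Proposition~\ref{prop:joachimsthal}); it is the step you compress into one sentence.
\begin{itemize}
\item Choose $g_F$-isothermal coordinates $\zeta$ on a half-disk sending $\partial\mathbb D$ into $\mathbb R$. Then $\partial_y\perp_{g_F}\partial_x$, so your eigenvector property reads $\mathring h_F(\partial_x,\partial_y)=-2\operatorname{Im}\phi=0$ on $\mathbb R$.
\item Define $\phi(z):=\overline{\phi(\bar z)}$ for $\operatorname{Im}z<0$. This extension is continuous, satisfies the same inequality, and is exactly the Hopf differential of the oppositely oriented copy in the chart $z=\bar\zeta$.
\item The same principal line field glues on both copies, with no interchange as needed for the paper's null fields. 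A boundary umbilic therefore has doubled index $-k/2$.
\end{itemize}

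\paragraph{Comparison with the paper.}
The paper reflects the linearized equation for $w$, whose natural reflection is dictated by the coefficients $a^{ij}$. It therefore needs Lemma~\ref{lem:analytic-geometric-compatibility} to match this with the $g_F$-conormal reflection. It also needs a Lipschitz-coefficient Hartman--Wintner theorem and boundary Schauder estimates for the Hessian asymptotics, hence its $C^{4,\alpha}$ hypotheses. Your reflection is geometric from the start, so none of this is needed and fewer derivatives should suffice. The price is that you exploit the special Codazzi structure of $d\nu_F$. The comparison method uses only ellipticity and the Wulff family, and extends to settings without that structure.

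\paragraph{Minor points.}
The constant vector is $X-\frac{2}{H_F}\nu_F$, not $X+\frac{2}{H_F}\nu_F$. The alternative at a boundary umbilic is ``$\phi\equiv0$ near $p$'', not ``the whole boundary is umbilic''.
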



We remark that the \(C^{4,\alpha}\)-regularity assumptions on \(F\) and
\(X\) in Theorem~\ref{thm:main} are technical. Although anisotropic
curvature is already defined at the \(C^2\) level, our approach requires
additional derivatives to control the regularity of the linearized and
reflected coefficients and to obtain the necessary Hessian asymptotics, see also Remark \ref{rem:coefficient-regularity}.

Very recently, Bastos \cite{Bastos2026} developed a capillary version of the Gálvez-Mira \cite{GalvezMira2020} method for classes of surfaces governed by elliptic equations and admitting a transitive capillary family. However, in Bastos’ formulation the capillary disk is obtained as \(\Sigma\cap\overline{\bR^3_+}\), where \(\Sigma\) is an immersed open surface. Consequently, every boundary point of the disk is an interior point of an a priori extension satisfying the same elliptic equation. In our setting, one is given only a CAMC immersion of a closed disk in a half-space, and the anisotropic capillary boundary condition does not, in general, produce a CAMC extension across the supporting plane. Thus Bastos' result \cite{Bastos2026} cannot imply ours.

We also mention several other rigidity results about anisotropic capillary CAMC surfaces. Jia, Wang,  Zhang and the second author \cite{JiaWangXiaZhang2023} proved the Alexandrov-type theorem that any compact embedded
anisotropic capillary CAMC hypersurface in a half-space is a truncated
Wulff shape, generalizing the closed case of He-Li-Ma-Ge \cite{HLMG09}. Guo and the second author \cite{GuoXia2025} proved the
Barbosa-do Carmo-type theorem that any compact weakly stable immersed anisotropic capillary CAMC
hypersurface in a half-space is a truncated
Wulff shape, generalizing the closed case of Palmer \cite{Palmer07}.

\

\noindent\textbf{Idea of the proof.}
Our proof starts from the basic comparison idea of Koiso and Palmer
\cite{KoisoPalmer2010}. For each \(p\in\Dbar\), we compare \(X\) with
the translated and scaled Wulff surface \(\WulffR{r_0}{c_p}\), where
\[
 r_0=\frac{2}{H_F},
 \qquad
 c_p=X(p)-r_0\nu_F(p),
\]
which has the same point, oriented tangent plane, and anisotropic mean
curvature as \(X\) at \(p\). The deviation from this model is measured
by the trace-free anisotropic second fundamental form
\[
 \mathring h_F
 =g_F\left(
   \left(d\nu_F-\frac{H_F}{2}\Id\right)\cdot,\cdot
 \right).
\]
Away from its zero set, \(\mathring h_F\) has signature \((1,1)\) and
defines two null line fields \(\mathcal L_+\) and \(\mathcal L_-\). At an interior
anisotropic umbilic (A-umbilic) point, the difference \(w\) of the graph functions of \(X\) and its
tangent Wulff surface satisfies a uniformly elliptic linear equation \eqref{eq:interior-linear-difference} and
has vanishing second derivative at $0$. The Hartman--Wintner expansion then gives
\[
 \mathring h_F=c_0D^2P_m+o(\varrho^{m-2}),
 \qquad
 P_m=c\,\Rea(z^m),\quad m\geq3,
\]
and hence
\[
 \ind_p(\mathcal L_\pm)=-\frac{m-2}{2}<0;
\]
this is the interior Koiso--Palmer mechanism, formulated in
Proposition~\ref{prop:interior-index}.

The principal new input is the analysis at a genuine anisotropic
capillary boundary. The anisotropic capillary boundary condition yields a conormal
boundary condition \eqref{eq:linear-conormal} for \(w\), but the surface
cannot in general be reflected across the support plane preserving the CAMC condition. We therefore make
a boundary-preserving change of variables and reflect the linear
elliptic equation for \(w\), rather than the immersed surface itself,
to obtain the boundary expansion of
Lemma~\ref{lem:boundary-expansion}. The analytic reflection determined
by the coefficients of this equation need not coincide with the
geometric reflection determined by the \(g_F\)-conormal \(\eta_F\);
Lemma~\ref{lem:analytic-geometric-compatibility} proves that these two
reflections agree to the order required to preserve the index. Together
with the anisotropic Joachimsthal identity
\[
 \mathring h_F(\tau,\eta_F)=0
 \qquad\text{on }\partial\mathbb D,
\]
this allows \(\mathcal L_+\) on one copy of the disk to be glued to \(\mathcal L_-\) on
the oppositely oriented copy. This doubling-and-interchange step is
topologically related to Bastos' construction \cite{Bastos2026}. Proposition~\ref{prop:boundary-index} shows
that every boundary A-umbilic has doubled index
\(-\frac{m-2}{2}<0\). The resulting line field on the doubled disk
\(\widehat\Sigma\simeq\bS^2\) therefore has only negative-index
singularities, contradicting the Poincar\'e--Hopf theorem unless
\(\mathring h_F\equiv0\).

\noindent\textbf{Organization of the paper.}
In Section~2, we collect the anisotropic notation and defines the Wulff
comparison surfaces. In Section~3, we derive the interior and boundary graph
equations, proves uniform ellipticity, and establishes the local geometric
asymptotics. In Section~4, we establish the index theorem for the null line
fields at interior and boundary A-umbilics and apply it to the oriented
double of the parameter disk, and    prove Theorem~\ref{thm:main}.
In Appendices~\ref{app:interior-expansion} and
\ref{app:boundary-expansion} we prove the interior and
boundary expansions, respectively. In Appendix~\ref{app:graph}, we derive the
graph equations and verify the ellipticity.

\

\noindent\textbf{Disclosure on AI assistance.}
The research problem and the principal mathematical ideas were formulated by the authors. The authors used AI-assisted tools, principally ChatGPT 5.6 to assist with some PDE estimates and to check computations. The authors verified and completed all mathematical
arguments and take full responsibility for the content. 

\

\section{Preliminaries}

We use the same letter $F$ for its one-homogeneous extension to
$\bR^3$:
\[
 F(z)=\abs z\,F(z/\abs z)\quad(z\neq0),
 \qquad F(0)=0.
\]
We write $DF$ and $D^2F$ for the gradient and Hessian on $\bS^2$, and
$\bar\nabla F$ and $\bar\nabla^2F$ for the Euclidean gradient
and Hessian of the extension. Set
\begin{equation*}
 A_F(\xi):=D^2F(\xi)+F(\xi)\Id.
\end{equation*}
The Cahn--Hoffman map is
\begin{equation*}
 \Phi(\xi):=\bar\nabla F(\xi)
 =DF(\xi)+F(\xi)\xi.
\end{equation*}
Strict ellipticity \eqref{eq:strict-ellipticity} means that
\[
 d\Phi_\xi=A_F(\xi):T_\xi\bS^2\longrightarrow T_\xi\bS^2
\]
is positive definite. Hence $\Phi$ is a diffeomorphism onto the smooth,
strictly convex closed surface
\(
\Wulff:=\Phi(\bS^2).
\)
Let
\[
 F^\circ(x):=\sup_{z\in\bS^2}\frac{\inner{x}{z}}{F(z)}.
\]
For $r_0>0$ and $x_0\in\bR^3$, we use the notation
\begin{equation*}
 \WulffR{r_0}{x_0}
 :=\{x\in\bR^3:F^\circ(x-x_0)=r_0\}
 =x_0+r_0\Wulff.
\end{equation*}

Let $X:M^2\to\bR^3$ be an oriented immersion. Throughout, $\nu:M^2\to
\bS^2$ denotes its chosen Euclidean unit normal. The anisotropic normal is
\begin{equation*}
 \nu_F:=\Phi\circ\nu=\Phi(\nu),\qquad
 d\nu_F=A_F(\nu)\circ d\nu.
\end{equation*}
The eigenvalues $\kappa_1^F,\kappa_2^F$ of $d\nu_F$ are real and are
called the anisotropic principal curvatures. We define the anisotropic
mean curvature by
\begin{equation}\label{eq:HF-def}
 H_F:=\tr(d\nu_F)=\kappa_1^F+\kappa_2^F.
\end{equation}
Although $d\nu_F$ need not be self-adjoint with respect to the usual first
fundamental form, it is self-adjoint with respect to the relative metric
\begin{equation*}
 g_F(u,v):=\inner{A_F(\nu)^{-1}u}{v}.
\end{equation*}
Indeed,
\[
 g_F(d\nu_F(u),v)=\inner{d\nu(u)}{v}
 =\inner{u}{d\nu(v)}=g_F(u,d\nu_F(v)).
\]
Define the symmetric trace-free anisotropic second fundamental form by
\begin{equation*}
 \mathring h_F(u,v)
 :=g_F\left(\left(d\nu_F-\frac{H_F}{2}\Id\right)u,v\right)
 =h(u,v)-\frac{H_F}{2}g_F(u,v).
\end{equation*}
At a point where $\mathring h_F\neq0$, this bilinear form is
nondegenerate and has
signature $(1,1)$. It therefore determines two unoriented null line
fields:
\[
 \cL_1(p)\cup\cL_2(p)
 =\{[v]\in\mathbb P(T_p\Dbar):(\mathring h_F)_p(v,v)=0\}.
\]
Here $\mathbb P(T_p\Dbar)$ is the set of unoriented lines in the tangent
space:
\[
 \mathbb P(T_p\Dbar)
 :=(T_p\Dbar\setminus\{0\})/\mathbb R^*,
 \qquad [v]=\Span\{v\}=\Span\{-v\}.
\]
Thus its elements are unoriented tangent lines. If
$p\in\partial\mathbb D$, then $T_p\Dbar$ is the full two-dimensional
tangent space of the manifold with boundary, not the inward tangent cone.
The common singular set of the two line fields is
\[
 \cU:=\{p:(\mathring h_F)_p=0\},
\]
whose points are called anisotropic umbilics, or A-umbilics.

\begin{lemm}
\label{lem:global-null-label}
On the oriented surface $\Dbar\setminus\cU$, the two null directions of
$\mathring h_F$ define two global continuous line fields $\cL_+$ and
$\cL_-$.
\end{lemm}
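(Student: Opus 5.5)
We label the two null directions of $\mathring h_F$ using the orientation of $\Dbar$; the labeling is then pointwise canonical and continuous, so no global monodromy argument is needed. Fix the orientation on $\Dbar$ induced by the chosen normal $\nu$. At $p\notin\cU$, $(\mathring h_F)_p$ is trace-free with respect to $g_F$ and nonzero. Its $g_F$-self-adjoint endomorphism $S=d\nu_F-\frac{H_F}{2}\Id$ therefore has eigenvalues $\pm\lambda$ with $\lambda>0$ and $g_F$-orthonormal eigenvectors $e_+,e_-$. The null lines are exactly $[e_+ + e_-]$ and $[e_+-e_-]$: writing $v=ae_++be_-$ gives $\mathring h_F(v,v)=\lambda(a^2-b^2)$. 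Each eigenvector is determined only up to sign, so this description does not yet tell the two null lines apart. The orientation will remove that ambiguity.

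\textbf{Canonical labeling.} Let $J$ be the $g_F$-rotation by $+\pi/2$ determined by the orientation, so that $J e_+=\pm e_-$. Replacing $e_-$ by $J e_+$, we define
\[
 \cL_+(p):=[e_++Je_+],\qquad \cL_-(p):=[e_+-Je_+].
\]
These are independent of the sign of $e_+$, since replacing $e_+$ by $-e_+$ sends $e_+\pm Je_+$ to $-(e_+\pm Je_+)$, which spans the same line. So each of $\cL_\pm(p)$ is a well-defined unoriented line.

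An equivalent intrinsic description avoids eigenvectors altogether: $\cL_+(p)$ is the null line $[v]$ with $\mathring h_F(v,Jv)>0$, and $\cL_-(p)$ the one with $\mathring h_F(v,Jv)<0$. To check this, take $v=e_+\pm Je_+$. Then $Jv=Je_+\mp e_+$, and $\mathring h_F(v,Jv)=\pm\lambda\,g_F(e_+,e_+)\cdot(\text{positive})$, up to a fixed positive factor. The sign of this expression is independent of the sign of $v$, and it is opposite on the two null lines. This also shows the two null lines never coincide, because $\lambda\neq0$.

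\textbf{Continuity.} On $\Dbar\setminus\cU$, including boundary points (where $T_p\Dbar$ is the full tangent plane), the quantities $g_F$, $J$ and $\mathring h_F$ vary continuously. Locally we can choose continuous frames and write the null lines as the roots of a quadratic form with nonvanishing discriminant. The roots therefore vary continuously, and the sign of $\mathring h_F(v,Jv)$ is locally constant on each root branch. Hence $\cL_+$ and $\cL_-$ are continuous, and they are defined globally since the labeling is pointwise.

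The only point that needs care is the sign computation showing that $\mathring h_F(v,Jv)$ has opposite signs on the two null lines, i.e. that the labeling really separates them. Everything else follows from nondegeneracy off $\cU$.
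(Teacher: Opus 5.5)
Your construction $\cL_\pm(p)=[e_+\pm Je_+]$ is exactly the paper's $R^F_{\pm\pi/4}E_+$: rotate the positive eigenline $E_+$ of $d\nu_F-\frac{H_F}{2}\Id$ by $\pm\pi/4$ in $g_F$ using the orientation. The proposal is therefore correct and follows essentially the same proof. One harmless slip in your intrinsic description: for $v=e_+\pm Je_+$ one actually gets $\mathring h_F(v,Jv)=\mp2\lambda\,g_F(e_+,e_+)$, so $\cL_+$ is the null line with $\mathring h_F(v,Jv)<0$. This reverses your inequality but leaves the separation and continuity argument intact.
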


\begin{proof}
At every point outside $\cU$, the trace-free
$g_F$-self-adjoint operator
$d\nu_F-H_F\Id/2$ has eigenvalues $\lambda$ and $-\lambda$, with
$\lambda>0$. Its positive eigenspace therefore defines a global
unoriented line field $E_+$. Let $J_F$ denote rotation through $\pi/2$
with respect to the metric $g_F$ and the chosen orientation of the disk,
and let $R_\theta^F$
denote the corresponding rotation by $\theta$. The two lines
\[
 \cL_+:=R^{F}_{\pi/4}E_+,\qquad
 \cL_-:=R^{F}_{-\pi/4}E_+
\]
are the null lines of $\mathring h_F$. Since $E_+$ is an unoriented line
field, both rotations define global line fields.
\end{proof}

Set $r_0=2/H_F$. For every $p\in\Dbar$, define
\[
 c_p:=X(p)-r_0\nu_F(p).
\]
The translated and scaled Wulff shape
$\WulffR{r_0}{c_p}=c_p+r_0\Wulff$ is used as the comparison shape at
$p$. Near $X(p)$, it is parametrized by
\begin{equation}\label{eq:tangent-wulff-parametrization}
 \widetilde X_p(\xi):=c_p+r_0\Phi(\xi),
 \qquad \xi\in\bS^2\ \text{near }\nu(p).
\end{equation}
We give it the orientation for which its Euclidean unit normal field is
$\widetilde\nu(\xi):=\xi$. At $\xi=\nu(p)$,
\begin{equation}\label{eq:tangent-wulff-contact}
 \widetilde X_p(\nu(p))
 =c_p+r_0\Phi(\nu(p))=X(p),\qquad
 \widetilde\nu(\nu(p))=\nu(p).
\end{equation}
Thus the two surfaces pass through the same point and have the same
tangent plane and oriented normal there.

To compute the curvature of the comparison surface, note that
$d\widetilde X_p=r_0A_F(\xi)$ on $T_\xi\bS^2$, so its tangent plane at
$\widetilde X_p(\xi)$ is $\xi^\perp$. Its anisotropic normal
is $\widetilde\nu_F=\Phi(\xi)$, and hence
\begin{equation*}
 d\widetilde\nu_F=r_0^{-1}d\widetilde X_p,
 \qquad
 \widetilde H_F=\frac{2}{r_0}=H_F.
\end{equation*}
Consequently, the comparison Wulff shape has the same constant
anisotropic mean curvature as $X$. Moreover, $p$ is an A-umbilic if and
only if the immersed surface and $\WulffR{r_0}{c_p}$ have the same
second fundamental form at the contact point. Indeed, at an A-umbilic,
$d\nu_F(p)=r_0^{-1}\Id$; since the two normals agree and $A_F(\nu(p))$
is invertible, this is equivalent to equality of their ordinary shape
operators.
If $p\in\partial\mathbb D$, then
\begin{equation*}
 \inner{c_p}{E_3}=r_0\omega_0.
\end{equation*}
Every point on the comparison Wulff shape can be written as
\[
 y=c_p+r_0\Phi(\xi),\qquad \xi\in\bS^2,
\]
and its anisotropic normal at $y$ is $\Phi(\xi)$. The identity
\begin{equation*}
 \inner{y}{E_3}
 =r_0\bigl(\omega_0-\inner{\Phi(\xi)}{-E_3}\bigr)
\end{equation*}
therefore gives
\[
 y\in\partial\mathbb R^3_+
 \quad\Longleftrightarrow\quad
 \inner{\Phi(\xi)}{-E_3}=\omega_0.
\]
\section{Graph equations and local asymptotics}

\begin{lemm}\label{lem:interior-graph-difference}
Fix $p\in\mathbb D$ and let $\widetilde X$ be a
local parametrization of the comparison Wulff shape
$\WulffR{r_0}{c_p}$ defined in \eqref{eq:tangent-wulff-parametrization}.
By \eqref{eq:tangent-wulff-contact},
\[
 P:=T_{X(p)}X=T_{X(p)}\widetilde X=\nu(p)^\perp.
\]
Choose a constant unit vector $\zeta$ with
$\inner{\zeta}{\nu(p)}\neq0$. In a neighborhood of $p$, the two
surfaces can be written as graphs $u$ and $\widetilde u$ over $P$ in
the direction $\zeta$. There is a \(C^{4,\alpha}\) function $G_\zeta$ whose Hessian
is uniformly positive definite on every bounded subset of $\mathbb R^2$,
such that
\begin{equation}\label{eq:interior-graph-equations}
 \Div DG_\zeta(Du)=-\inner{\zeta}{\nu(p)}H_F,
 \qquad
 \Div DG_\zeta(D\widetilde u)=-\inner{\zeta}{\nu(p)}H_F.
\end{equation}
Consequently, for $w=u-\widetilde u$,
\begin{equation}\label{eq:interior-linear-difference}
 \partial_i(a^{ij}w_j)=0,
 \qquad
 a^{ij}:=\int_0^1(G_\zeta)_{q_iq_j}
 \bigl(D\widetilde u+\theta Dw\bigr)\dd\theta.
\end{equation}
If $p$ is an A-umbilic, then
\begin{equation}\label{eq:interior-second-order-contact}
 w(0)=0,\qquad Dw(0)=0,\qquad D^2w(0)=0.
\end{equation}
\end{lemm}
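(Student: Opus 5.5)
We derive the graph equation from the first variation of anisotropic area written in graph form, then take the difference of the two equations. Choose coordinates $x=(x_1,x_2)$ on $P=\nu(p)^\perp$ with $X(p)$ as origin. A surface near $X(p)$ is then written as $x\mapsto x+u(x)\zeta$. Since $\inner{\zeta}{\nu(p)}\neq0$, the implicit function theorem applied to $X$ and to $\widetilde X_p$ gives $C^{4,\alpha}$ graph functions $u$ and $\widetilde u$ on a small disk. Both satisfy $u(0)=0$ and $Du(0)=0$, because the tangent plane at the contact point is $P$.

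\textbf{Step 1: the graph equation.} For a graph over $P$ in direction $\zeta$, an upward (non-unit) normal is $N(q)=\nu(p)-\sum_i q_i\,\zeta^{\perp}_i$, which depends affinely on $q=Du$. Concretely, $N(q)$ is the vector orthogonal to $e_1+q_1\zeta$ and $e_2+q_2\zeta$, normalized so that $\inner{N}{\zeta}$ is a fixed constant multiple of $\inner{\zeta}{\nu(p)}$. With this normalization, the area element satisfies $dA=\abs{N}\,c\,dx$ for a constant $c$. Using one-homogeneity of $F$, the anisotropic area becomes $\int G_\zeta(Du)\,dx$ with $G_\zeta(q):=c^{-1}F(N(q))$ up to the constant factor. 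Then:
\begin{itemize}
\item $G_\zeta\in C^{4,\alpha}$, since $F\in C^{4,\alpha}$ away from the origin and $N(q)\neq0$.
\item $D^2G_\zeta(q)$ is the pullback of $\bar\nabla^2F(N)$ under the injective linear map $q\mapsto N(q)$ restricted to directions transverse to $N$.
\item $\bar\nabla^2F(N)=\abs N^{-1}A_F(N/\abs N)$ on $N^\perp$. The image of the linear part of $q\mapsto N(q)$ is transverse to $N$, so positive definiteness follows from \eqref{eq:strict-ellipticity}. The bound is uniform on bounded sets of $q$ by continuity and compactness.
\end{itemize}

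\textbf{Step 2: the constant.} The Euler--Lagrange operator $\Div DG_\zeta(Du)$ equals the first variation of anisotropic area under the vertical variation $\zeta\varphi$. That variation is $-\int H_F\inner{\zeta}{\nu}\varphi\,dA$. The factor $\inner{\zeta}{\nu}\,dA$ is exactly the constant times $dx$ fixed by our normalization. This gives \eqref{eq:interior-graph-equations}, once the normalization of $G_\zeta$ and the orientation convention are checked to produce the sign $-\inner{\zeta}{\nu(p)}H_F$. The same computation applies to $\widetilde u$, since the comparison Wulff shape has $\widetilde H_F=H_F$ with the same orientation.

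\textbf{Step 3: the linear equation and contact.} Subtract the two equations and write $DG_\zeta(Du)-DG_\zeta(D\widetilde u)=\int_0^1 D^2G_\zeta(D\widetilde u+\theta Dw)\,d\theta\,Dw$. This gives \eqref{eq:interior-linear-difference}, with $a^{ij}$ uniformly elliptic because $Du$ and $D\widetilde u$ are bounded. For \eqref{eq:interior-second-order-contact}:
\begin{itemize}
\item $w(0)=0$ and $Dw(0)=0$ follow from \eqref{eq:tangent-wulff-contact}.
\item At $0$ the second fundamental form of a graph in direction $\zeta$ over its tangent plane equals $\inner{\zeta}{\nu(p)}D^2u(0)$. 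Hence equal second fundamental forms give $D^2u(0)=D^2\widetilde u(0)$.
\item Equality of the second fundamental forms at an A-umbilic was shown in Section~2.
\end{itemize}

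The main obstacle is Step 2: getting the normalization of $G_\zeta$ and the orientation right so that the right-hand side is exactly $-\inner{\zeta}{\nu(p)}H_F$. I would verify this by a direct first-variation computation rather than by expanding derivatives.
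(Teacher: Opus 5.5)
Your proposal is correct and is essentially the paper's proof. The paper fixes your normalization by taking $N(q)=\nu(p)+q_1(\zeta\times e_2)+q_2(e_1\times\zeta)$, which equals $X_1\times X_2$ when $e_1\times e_2=\nu(p)$, so that $\inner{N}{\zeta}=\inner{\nu(p)}{\zeta}$, $\dd A=\abs{N(Du)}\dd x$ and $G_\zeta=F\circ N$ with no extra constant (with your $\dd A=c\abs N\dd x$ the right choice would be $cF(N)$, not $c^{-1}F(N)$); it proves positive definiteness (the plane $\zeta^\perp$ does not contain $N$) and second-order contact exactly as you do, and it settles your deferred sign by equating $\left.\frac{d}{d\varepsilon}\right|_{0}\int G_\zeta(Du+\varepsilon D\varphi)\dd x=-\int\Div DG_\zeta(Du)\,\varphi\dd x$ with the first variation $+\int H_F\inner{\varphi\zeta}{\nu}\dd A=\inner{\zeta}{\nu(p)}\int H_F\varphi\dd x$, the plus sign being the one compatible with $H_F=\tr(d\nu_F)$ (both signs written in your Step 2 are flipped, which is why your final sign still comes out right).
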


\begin{proof}
Let $\pi_\zeta:\mathbb R^3\to P$ be the projection along the lines
parallel to $\zeta$. Since $\zeta\notin P$, the restriction of
$d\pi_\zeta$ to the common tangent plane $P$ is the identity. The
inverse function theorem therefore gives the two local graph
representations in the statement.

Choose an oriented orthonormal basis $(e_1,e_2)$ of $P$ with
$e_1\times e_2=\nu(p)$, translate $X(p)$ to the origin, and identify
$P$ with $\mathbb R^2$ through this basis. The graph representations are
\[
 X(x)=x_1e_1+x_2e_2+u(x)\zeta,
 \qquad
 \widetilde X(x)=x_1e_1+x_2e_2+\widetilde u(x)\zeta.
\]
For $q=(q_1,q_2)$, define
\[
 V_\zeta(q)
 :=\nu(p)+q_1(\zeta\times e_2)+q_2(e_1\times\zeta),
 \qquad
 G_\zeta(q):=F(V_\zeta(q)).
\]
The first variation of the graph energy and the ellipticity calculation
are given in Appendix~\ref{app:graph}. They yield
\eqref{eq:interior-graph-equations}; subtraction and the fundamental
theorem of calculus then give \eqref{eq:interior-linear-difference}.

Since both graphs are taken over their common tangent plane,
$u(0)=\widetilde u(0)=0$ and $Du(0)=D\widetilde u(0)=0$. If $p$ is an
A-umbilic, the comparison
property in Section~2 shows that the surface and
the model have the same ordinary second fundamental form at $p$. For
either graph, differentiating $\inner{\nu_u}{X_j}=0$ gives
\[
 \inner{\partial_i\nu_u}{X_j}
 =-\inner{\nu_u}{\zeta}u_{ij}.
\]
Since $\inner{\nu(p)}{\zeta}\neq0$, equality of the second fundamental
forms implies that the two Hessians agree at the origin. This proves
\eqref{eq:interior-second-order-contact}.
\end{proof}

\begin{rema}\label{rem:coefficient-regularity}
The assumption $F,X\in C^{4,\alpha}$ gives
$A=(a^{ij})\in C^{2,\alpha}$. In the boundary coordinates introduced
below, the coefficients are $C^{1,\alpha}$ up to $t=0$, and the reflected
coefficient matrix defined in Appendix~\ref{app:boundary-expansion} is
Lipschitz across $t=0$. This regularity is sufficient for the boundary
Hartman--Wintner expansion and the Schauder estimate used to obtain the
second-order asymptotic.
\end{rema}

Fix $p\in\partial\mathbb D$. Let $\tau$ be a unit tangent to the boundary and let
$\mu$ be the unit outward conormal of $\partial\mathbb D$ in the surface.
Set $\eta:=-\mu$, the inward conormal, and choose the sign of $\tau$ so
that
\[
 \tau\times\eta=\nu(p).
\]
Because $\inner{X}{E_3}>0$ in the interior and the surface meets the
support plane transversely,
\[
 b:=\inner{\eta}{E_3}=\inner{\mu}{-E_3}>0.
\]
The decomposition of $-E_3$ in $\Span\{\nu(p),\eta\}$ is
\begin{equation*}
 -E_3=a\nu(p)-b\eta,
 \qquad a^2+b^2=1.
\end{equation*}
Define the graph direction
\begin{equation*}
 \bar\nu:=b\nu(p)+a\eta.
\end{equation*}
Then
\begin{equation}\label{eq:oblique-direction-identities}
 \abs{\bar\nu}=1,\qquad
 \bar\nu\in E_3^\perp=T_{X(p)}\partial\mathbb R^3_+,\qquad
 \inner{\bar\nu}{\nu(p)}=b>0,\qquad
 \tau\times\bar\nu=-E_3.
\end{equation}
In particular, $\bar\nu$ is transverse to the common tangent plane
\(
T_{X(p)}X=\Span\{\tau,\eta\}
\).

We apply the graph construction of Lemma~\ref{lem:interior-graph-difference}
with
\[
 (e_1,e_2)=(\tau,\eta),\qquad \zeta=\bar\nu,
\]
using a half-disk as the base domain. Translate $X(p)$ to the origin.
The surface can be written as a graph in the direction $\bar\nu$:
\begin{equation}\label{eq:adapted-graph}
 X(s,t)=s\tau+t\eta+u(s,t)\bar\nu,
 \qquad t\geq0.
\end{equation}
Indeed,
\begin{equation*}
 \inner{X(s,t)}{E_3}=bt,
\end{equation*}
because both $\tau$ and $\bar\nu$ lie in the support plane. Thus the
graph points with $t=0$ lie in $\partial\mathbb R^3_+$, whereas those
with $t>0$ lie in $\mathbb R^3_+$. The comparison Wulff shape
$\WulffR{r_0}{c_p}$ is a graph
over the same tangent plane and in the same graph direction:
\[
 \widetilde X(s,t)=s\tau+t\eta+\widetilde u(s,t)\bar\nu,
 \qquad t\geq0.
\]
For $q=(q_1,q_2)\in\mathbb R^2$, set
\[
 V(q):=\nu(p)-bq_1\tau-q_2E_3,
 \qquad G(q):=F(V(q)).
\]
Differentiating \eqref{eq:adapted-graph} and using
\eqref{eq:oblique-direction-identities} gives
\begin{equation}\label{eq:oblique-graph-normal}
 X_s\times X_t
 =(\tau+u_s\bar\nu)\times(\eta+u_t\bar\nu)
 =\nu(p)-bu_s\tau-u_tE_3=V(Du).
\end{equation}
Thus the oriented unit normal is $\nu_u:=V(Du)/\abs{V(Du)}$. Since
$\bar\nabla F$ is homogeneous of degree zero,
\begin{equation*}
 G_{q_1}(Du)=-b\inner{\nu_F}{\tau},
 \qquad
 G_{q_2}(Du)=\inner{\nu_F}{-E_3}.
\end{equation*}
The first-variation calculation in Appendix~\ref{app:graph} yields
\begin{align*}
 \partial_iG_{q_i}(Du)&=-bH_F &&\text{for }t>0,\\
 G_{q_2}(Du)&=\omega_0 &&\text{for }t=0.
\end{align*}
The model satisfies the same equation and boundary condition. Set
\begin{equation*}
 \rho(q):=\abs{V(q)},
 \qquad \nu(q):=\frac{V(q)}{\rho(q)}.
\end{equation*}
For $\zeta=(\zeta_1,\zeta_2)\in\bR^2$, set
$\xi=-b\zeta_1\tau-\zeta_2E_3$, and let $\xi^\top$ be its Euclidean
projection onto $\nu(q)^\perp$. The calculation in
Appendix~\ref{app:graph} gives
\begin{equation}\label{eq:G-Hessian-exact}
 D^2G(q)[\zeta,\zeta]
 =\frac{1}{\rho(q)}
   \inner{A_F(\nu(q))\xi^\top}{\xi^\top}.
\end{equation}
In particular, for every $M<\infty$ there are constants
$0<\lambda_M\leq\Lambda_M<\infty$ such that
\begin{equation}\label{eq:G-uniform-ellipticity}
 \lambda_M\abs\zeta^2
 \leq D^2G(q)[\zeta,\zeta]
 \leq\Lambda_M\abs\zeta^2
 \qquad(\abs q\leq M).
\end{equation}

Define $w:=u-\widetilde u$ and, for $0\leq\theta\leq1$, define
\begin{equation*}
 q_\theta:=D\widetilde u+\theta(Du-D\widetilde u)
 =D\widetilde u+\theta Dw.
\end{equation*}

Applying the fundamental theorem of calculus to each component of \(DG\), we obtain 
\begin{equation*}
 G_{q_i}(Du)-G_{q_i}(D\widetilde u)
 =\int_0^1G_{q_iq_j}(q_\theta)w_j\dd\theta
 =a^{ij}w_j,
\end{equation*}
where
\begin{equation}\label{eq:aij-def}
 a^{ij}(s,t):=\int_0^1G_{q_iq_j}(q_\theta(s,t))\dd\theta.
\end{equation}
Subtracting the interior and boundary equations gives
\begin{align}
 \partial_i(a^{ij}w_j)&=0 &&\text{for }t>0,\notag\\
 a^{2j}w_j&=0 &&\text{for }t=0.
 \label{eq:linear-conormal}
\end{align}

On a relatively compact graph neighborhood, choose $M<\infty$ with
\[
 \abs{Du},\abs{D\widetilde u}\leq M.
\]
Then $\abs{q_\theta}\leq M$. Integrating
\eqref{eq:G-uniform-ellipticity} in $\theta$, we obtain for every
$\zeta\in\bR^2$
\begin{equation*}
 \lambda_M\abs\zeta^2
 \leq a^{ij}\zeta_i\zeta_j
 \leq\Lambda_M\abs\zeta^2.
\end{equation*}
Thus $A=(a^{ij})$ is symmetric, uniformly positive definite, and belongs
to $C^{2,\alpha}$.

Since both graphs are taken over their common tangent plane,
we have
\begin{equation*}
 w(0)=u(0)-\widetilde u(0)=0,\qquad
 Du(0)=D\widetilde u(0)=0,
 \qquad Dw(0)=0.
\end{equation*}

At an A-umbilic, $\mathring h_F(p)=0$. Hence
\[
 d\nu_F(p)=\frac{H_F}{2}\Id.
\]
The comparison Wulff surface also satisfies
$d\widetilde\nu_F=(H_F/2)\Id$ at the contact point. Since the normals agree and
$A_F(\nu(p))$ is invertible, their ordinary second fundamental forms
also agree.
For a graph $X_i=e_i+u_i\bar\nu$, where
$(e_1,e_2)=(\tau,\eta)$, the identity
$\inner{\nu_u}{X_j}=0$ gives
\begin{equation*}
 \inner{\partial_i\nu_u}{X_j}
 =-\inner{\nu_u}{\bar\nu}u_{ij}.
\end{equation*}
At the origin, $\nu_u=\nu_{\widetilde u}=\nu(p)$ and
$\inner{\nu(p)}{\bar\nu}=b>0$. Equality of the ordinary second fundamental
forms therefore gives, for every
$i,j\in\{1,2\}$,
\[
 -b u_{ij}(0)=-b\widetilde u_{ij}(0),
\]
and hence
\begin{equation*}
 D^2u(0)=D^2\widetilde u(0),
 \qquad D^2w(0)=0.
\end{equation*}
Combining this with the first-order contact conditions, we obtain
\begin{equation}\label{eq:boundary-second-order-contact}
 w(0)=0,\qquad Dw(0)=0,\qquad D^2w(0)=0.
\end{equation}

\begin{prop}\label{prop:joachimsthal}
For each unit tangent vector $\tau$ to $\partial\mathbb D$, choose
$\eta_F$ so that
\begin{equation*}
 g_F(\tau,\eta_F)=0,
 \qquad g_F(\eta_F,\eta_F)=1.
\end{equation*}
Under the boundary hypotheses of Theorem~\ref{thm:main}, $\tau$ is an
eigenvector of $d\nu_F$. Consequently,
\begin{equation*}
 \mathring h_F(\tau,\eta_F)=0
 \quad\text{on }\partial\mathbb D.
\end{equation*}
\end{prop}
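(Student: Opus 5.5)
Differentiate the capillary condition along the boundary curve and use the $g_F$-self-adjointness of $d\nu_F$.

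\emph{Step 1 (differentiating the boundary condition).} Parametrize $\partial\mathbb D$ near a boundary point and let $\tau$ be the unit tangent. Since $\inner{\nu_F}{-E_3}\equiv\omega_0$ along $\partial\mathbb D$, differentiating in the direction $\tau$ gives
\[
 \inner{d\nu_F(\tau)}{E_3}=0 .
\]
Recall that $d\nu_F(\tau)=A_F(\nu)\,d\nu(\tau)$ lies in $T_pX=\nu^\perp$, because $A_F(\nu)$ maps $\nu^\perp$ to itself. Hence $d\nu_F(\tau)\in\nu^\perp\cap E_3^\perp$.

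\emph{Step 2 (the intersection is the line spanned by $\tau$).} The tangent plane $\nu^\perp=\Span\{\tau,\eta\}$ and the support plane $E_3^\perp$ both contain $\tau$, since $X(\partial\mathbb D)\subset\partial\mathbb R^3_+$. They are distinct planes by transversality: $b=\inner{\eta}{E_3}>0$. So their intersection is exactly $\Span\{\tau\}$. Therefore $d\nu_F(\tau)=\kappa\,\tau$ for some scalar $\kappa$, i.e. $\tau$ is an eigenvector of $d\nu_F$. This is the anisotropic Joachimsthal/Terquem statement.

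\emph{Step 3 (consequence for $\mathring h_F$).} Using the defining formula,
\[
 \mathring h_F(\tau,\eta_F)=g_F\!\left(\left(d\nu_F-\tfrac{H_F}{2}\Id\right)\tau,\eta_F\right)=\left(\kappa-\tfrac{H_F}{2}\right)g_F(\tau,\eta_F)=0 .
\]
Equivalently, because $d\nu_F$ is $g_F$-self-adjoint, its $g_F$-orthogonal complement $\Span\{\eta_F\}$ is the other eigenline. So $\eta_F$ is also an eigenvector, which matches the symmetric form of the statement.

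\emph{Where care is needed.} There is no serious obstacle; the argument needs only three checks.
\begin{enumerate}[label=(\roman*)]
 \item Differentiating along $\partial\mathbb D$ requires the boundary condition to hold identically there, with $X$ at least $C^2$ up to the boundary. The $C^{4,\alpha}$ hypothesis guarantees this.
 \item $d\nu_F(\tau)$ must be tangent to the surface. This follows from $A_F(\nu)$ preserving $\nu^\perp$.
 \item The two planes in Step 2 must be transverse. This is the transversality behind $b>0$, which the paper established just before the statement.
\end{enumerate}
The identity then holds at every boundary point, including boundary A-umbilics, where it is trivially true.
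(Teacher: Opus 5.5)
Your proof is correct and follows the paper's argument: differentiate the Young condition along $\tau$, then use that $d\nu_F(\tau)$ is tangent together with $b>0$ to conclude that $d\nu_F(\tau)$ is parallel to $\tau$. The paper phrases this step via the decomposition $-E_3=a\nu-b\eta$, while you phrase it as $\nu^\perp\cap E_3^\perp=\Span\{\tau\}$; after that, $g_F(\tau,\eta_F)=0$ kills $\mathring h_F(\tau,\eta_F)$ in both versions.
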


\begin{proof}
For completeness, we give a short proof; see also
\cite[Proposition~4.1]{GuoXia2025}. Differentiating the Young condition
along $\tau$ gives
\[
 0=\inner{d\nu_F(\tau)}{-E_3}.
\]
Since $d\nu_F(\tau)$ is tangent to the surface and
$-E_3=a\nu-b\eta$, with $b>0$, it follows that
\[
 \inner{d\nu_F(\tau)}{\eta}=0.
\]
Hence $d\nu_F(\tau)$ is parallel to $\tau$, so $\tau$ is an
eigenvector of $d\nu_F$. Since $g_F(\tau,\eta_F)=0$, we obtain
\[
 \mathring h_F(\tau,\eta_F)
 =g_F\left(\left(d\nu_F-\frac{H_F}{2}\Id\right)\tau,\eta_F\right)
 =0.
\]
\end{proof}

\begin{lemm}\label{lem:interior-expansion}
Let $A=(a^{ij})\in C^{1,\alpha}(B_\rho)$ be symmetric and uniformly
positive definite, and suppose that $w\in C^{3,\alpha}(B_\rho)$ satisfies
\[
 \partial_i(a^{ij}w_j)=0,\qquad
 w(0)=0,\quad Dw(0)=0,\quad D^2w(0)=0.
\]
Then either $w$ vanishes in a neighborhood of the origin, or there exist
an integer $m\geq3$, a nonzero homogeneous harmonic polynomial $P_m$ of
degree $m$, and an orientation-preserving linear coordinate system in
which, with $\varrho=\abs q$,
\begin{align}
 w&=P_m+o(\varrho^m),\notag\\
 Dw&=DP_m+o(\varrho^{m-1}),\notag\\
 D^2w&=D^2P_m+o(\varrho^{m-2}),\label{eq:int-exp-2}
\end{align}
as $q\to0$.
\end{lemm}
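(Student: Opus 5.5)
Our plan is to reduce to the classical Hartman--Wintner (Bers) theorem for divergence-form elliptic equations in the plane with $C^{1,\alpha}$ (in fact Hölder-continuous or Lipschitz suffices) coefficients. First we normalize the leading coefficients at the origin. Let $A_0:=A(0)$, which is symmetric positive definite. We make the linear change of variables $q=A_0^{-1/2}x$, composed if necessary with a reflection so that the map preserves orientation; since $A_0^{-1/2}$ has positive determinant, no reflection is actually needed. In the new coordinates the equation keeps divergence form $\partial_i(\tilde a^{ij}\tilde w_j)=0$ with $\tilde a^{ij}=A_0^{-1/2}AA_0^{-1/2}$ (up to the constant Jacobian factor). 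Thus $\tilde A(0)=\Id$, $\tilde A\in C^{1,\alpha}$, and the vanishing conditions on $w$ up to second order persist. Because $A\in C^{1,\alpha}$, we may expand the divergence as
\[
\tilde a^{ij}\tilde w_{ij}+(\partial_i\tilde a^{ij})\tilde w_j=0,
\]
a nondivergence equation with $C^{1,\alpha}$ principal part equal to $\Id$ at $0$ and $C^{0,\alpha}$ lower-order coefficients.

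Next we invoke the Hartman--Wintner theorem for such two-dimensional elliptic equations. Suppose $w\in C^{3,\alpha}$ does not vanish identically near $0$. Unique continuation in two dimensions (Carleman/Bers) excludes vanishing to infinite order, so $w$ has a finite vanishing order $m$. Hartman--Wintner then yields
\[
w=P_m+o(\varrho^m),\qquad Dw=DP_m+o(\varrho^{m-1}),
\]
where $P_m$ is a nonzero homogeneous polynomial of degree $m$ solving the frozen equation $\Delta P_m=0$, that is, $P_m=c\,\Rea(z^m)$ after a rotation. Since $D^2w(0)=0$ and $w$ is $C^{3,\alpha}$, the Taylor expansion forces $m\ge3$; indeed, if $m\le2$, the leading polynomial would be visible in the second-order Taylor data.

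The step we expect to be the main obstacle is the second-derivative asymptotics in \eqref{eq:int-exp-2}, since Hartman--Wintner classically controls only $w$ and $Dw$. For this we use a blow-up together with Schauder estimates. Set
\[
w_r(y):=r^{-m}w(ry),\qquad |y|\in(1/2,2).
\]
Then $w_r$ solves $\tilde a^{ij}(ry)\partial_{ij}w_r+r(\partial_i\tilde a^{ij})(ry)\partial_jw_r=0$, whose coefficients converge to $\delta^{ij}$ in $C^{\alpha}$ on the annulus with uniform bounds. Moreover $w_r\to P_m$ uniformly there by the first step. The difference $w_r-P_m$ satisfies the same equation with right-hand side $-(\tilde a^{ij}(ry)-\delta^{ij})\partial_{ij}P_m-r(\partial_i\tilde a^{ij})\partial_jP_m=O(r)$ in $C^{\alpha}$. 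Interior Schauder estimates on the annulus then give
\[
\|w_r-P_m\|_{C^{2,\alpha}(1/\sqrt2<|y|<\sqrt2)}\le C\bigl(\|w_r-P_m\|_{L^\infty}+O(r)\bigr)\to0.
\]
Rescaling back, we obtain $D^2w(q)=D^2P_m(q)+o(\varrho^{m-2})$, and likewise for $Dw$. This completes the argument, with the coordinate system being the orientation-preserving linear map constructed in the first step.
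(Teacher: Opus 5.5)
Your proposal is correct and follows essentially the same route as the paper. Both arguments normalize $A(0)=I$ by an orientation-preserving linear change of variables and rewrite the equation in nondivergence form with a $C^{0,\alpha}$ drift. Both then use the Hartman--Wintner theorem to get the expansions of $w$ and $Dw$, with $\Delta P_m=0$ and $m\geq 3$ forced by the vanishing conditions. Finally, both obtain the Hessian asymptotics by blowing up on an annulus and applying interior Schauder estimates to $w_r-P_m$, whose right-hand side is $O(r)$ in $C^{0,\alpha}$.
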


The first two expansions follow from the Hartman--Wintner theorem
\cite[Theorem~(H.--W.) and Remark~1.1, p.~231]{Alessandrini1987}; the
Hessian expansion follows by rescaling and the interior Schauder estimate
\cite[Theorem~6.2]{GilbargTrudinger2001}. Details are given in
Appendix~\ref{app:interior-expansion}.

For the boundary version, write
\[
 \halfball{\rho}:=\{(s,t):s^2+t^2<\rho^2,\ t\geq0\}.
\]

\begin{lemm}\label{lem:boundary-expansion}
Let $A=(a^{ij})\in C^{2,\alpha}(\halfball\rho)$ be symmetric and uniformly
positive definite, and suppose that $w\in C^{3,\alpha}(\halfball\rho)$
satisfies
\begin{equation*}
 \partial_i(a^{ij}w_j)=0\quad(t>0),
 \qquad e_t\cdot A\nabla w=0\quad(t=0),
\end{equation*}
together with the three conditions in
\eqref{eq:boundary-second-order-contact}. Then exactly
one of the following alternatives holds:
\begin{enumerate}[label=\textup{(\alph*)}]
 \item $w$ vanishes in a neighborhood of the origin relative to the
 closed half-disk;
 \item there exist orientation-preserving $C^{2,\alpha}$ coordinates
 $z=x+iy$ that preserve the boundary line and upper half-plane, an integer
 $m\geq3$, and a constant $c\neq0$ such that, with $\varrho=\abs z$,
 \begin{align}
  w(x,y)&=P_m(x,y)+o(\varrho^m),\notag\\
  Dw(x,y)&=DP_m(x,y)+o(\varrho^{m-1}),\notag\\
  D^2w(x,y)&=D^2P_m(x,y)+o(\varrho^{m-2}),\label{eq:bd-exp-2}
 \end{align}
 where
 \begin{equation}\label{eq:even-harmonic-polynomial}
  P_m(x,y)=c\Rea(z^m),
 \end{equation}
 as $z\to0$ in the closed upper half-plane.
\end{enumerate}
\end{lemm}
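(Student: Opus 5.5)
The plan is to reduce the conormal problem on the half-disk to an interior problem by reflection, and then apply Lemma~\ref{lem:interior-expansion} (or the Hartman--Wintner theorem directly) to the reflected solution.

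\emph{Step 1: normalize the boundary operator.} Construct $C^{2,\alpha}$ coordinates $(x,y)$ that preserve $\{t=0\}$ and $\{t>0\}$, are orientation preserving, and in which the conormal direction $A e_t$ becomes normal to the boundary. Concretely, let $\Psi$ be the flow of the vector field $A e_t$, normalized so its $t$-component equals one, and set $(s,t)=\Psi(x,y)$, with $x$ the boundary parameter and $y$ the flow time. In these coordinates the pulled-back divergence-form equation $\partial_i(\tilde a^{ij}w_j)=0$ has $\tilde a^{12}=0$ on $y=0$. Hence the boundary condition reads $\partial_y w=0$ there. A further linear change, preserving the half-plane, makes $\tilde A(0)=\Id$. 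The transformed coefficients lose one derivative and are $C^{1,\alpha}$ up to $y=0$, consistent with Remark~\ref{rem:coefficient-regularity}. The contact conditions \eqref{eq:boundary-second-order-contact} are preserved, since the change of variables is $C^{2,\alpha}$ with $\Psi(0)=0$; vanishing of $w$, $Dw$ and $D^2w$ at $0$ is invariant under such changes.

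\emph{Step 2: even reflection.} Extend $w$ evenly, $\hat w(x,-y)=w(x,y)$. Extend $\tilde a^{11},\tilde a^{22}$ evenly and $\tilde a^{12}$ oddly. Because $\tilde a^{12}=0$ on $y=0$, the reflected matrix $\hat A$ is Lipschitz (indeed $C^{0,1}$), symmetric and uniformly elliptic on a full disk. The Neumann condition implies that $\hat w\in W^{2,p}$ is a weak solution of $\partial_i(\hat a^{ij}\hat w_j)=0$ across $y=0$: test against $\varphi$ and integrate by parts separately on each half. The boundary fluxes $\tilde a^{2j}w_j=\tilde a^{22}w_y$ vanish. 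Thus the Hartman--Wintner theorem, which only needs Lipschitz (even Dini) coefficients, applies. Either $\hat w\equiv0$ near $0$, which is alternative (a), or $\hat w=P_m+o(\varrho^m)$ and $D\hat w=DP_m+o(\varrho^{m-1})$ with $P_m$ a nonzero homogeneous harmonic polynomial for $\hat A(0)=\Id$. Here $m\geq3$ because $D^2w(0)=0$, which makes $\hat w=O(\varrho^3)$. Since $\hat w$ is even in $y$, $P_m$ is even in $y$, and the even harmonic homogeneous polynomials of degree $m$ are exactly $c\Rea(z^m)$. This gives \eqref{eq:even-harmonic-polynomial}. Exclusivity of the two alternatives is clear, since $P_m\neq0$.

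\emph{Step 3: Hessian asymptotics.} Apply rescaling and Schauder estimates on the original half-disk, not on the reflected disk, because $\hat A$ is only Lipschitz. Set $w_r(z)=r^{-m}w(rz)$ on the half-annulus $\{1/2<|z|<2,\ y\geq0\}$. It satisfies a divergence-form equation with coefficients $\tilde A(r\cdot)$, which have uniformly bounded $C^{1,\alpha}$ norms, together with the conormal condition. The boundary Schauder estimate for the oblique/Neumann problem (Gilbarg--Trudinger, Theorem~6.26 type, in nondivergence form after expanding $\partial_i\tilde a^{ij}$) then gives
\[
 \|w_r-P_m\|_{C^{2,\alpha}}\leq C\bigl(\|w_r-P_m\|_{C^0}+\text{terms}\to0\bigr).
\]
Here $P_m$ solves the frozen problem with $\partial_yP_m=0$. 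The residual $\partial_i((\tilde a^{ij}(r\cdot)-\delta^{ij})\partial_jP_m)$ is $O(r)$ in $C^\alpha$. This yields \eqref{eq:bd-exp-2}.

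The main obstacle is Step~1. One must check that the conormal-straightening coordinates keep enough regularity ($C^{2,\alpha}$ map, $C^{1,\alpha}$ coefficients) and that $\tilde a^{12}$ truly vanishes on the boundary, so that the reflected coefficients are Lipschitz rather than merely bounded. Without this, Hartman--Wintner would not be available.
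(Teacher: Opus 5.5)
Your proposal is correct and follows essentially the same route as the paper. The steps match: straighten the conormal so that $\tilde a^{12}=0$ on $y=0$, normalize $\tilde A(0)=\Id$ by a diagonal map, and reflect evenly to get a Lipschitz, uniformly elliptic divergence-form equation. Then apply Hartman--Wintner, where evenness forces $P_m=c\Rea(z^m)$ and the vanishing $2$-jet forces $m\geq3$, and recover the Hessian asymptotics by rescaling plus boundary Schauder estimates on half-annuli in the unreflected half-plane. The only difference is that you straighten via the flow of $Ae_t/a^{22}$, while the paper uses the explicit shear $\Psi(x,y)=(x+\beta(x)y,y)$ with $\beta=a^{12}/a^{22}$ on $t=0$. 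Both maps have the same differential on $y=0$, which is all this lemma needs; the explicit shear is simply more convenient for the later comparison in Lemma~\ref{lem:analytic-geometric-compatibility}.
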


The proof in Appendix~\ref{app:boundary-expansion} changes coordinates
so that the conormal condition becomes a homogeneous Neumann condition,
reflects the equation across the boundary line, and applies the
Hartman--Wintner theorem; see
\cite[p.~579]{AlessandriniMagnanini1992}. A boundary Schauder estimate
then gives the expansion of $D^2w$.

\begin{lemm}\label{lem:tensor-transfer}
Let $u$ and $\widetilde u$ be local graph functions for the surface and
its comparison Wulff shape over their common tangent plane, and write
$w=u-\widetilde u$. Suppose that there are local coordinates $q$,
centered at the contact point, in which
\[
 Dw=DP_m+o(\varrho^{m-1}),\qquad
 D^2w=D^2P_m+o(\varrho^{m-2}),\qquad m\geq3,
\]
where $\varrho=\abs q$. Then, in the same coordinates, there is a
constant $c_0\neq0$ such that
\begin{equation}\label{eq:tracefree-hF-leading-hessian}
 \mathring h_F=c_0D^2P_m+o(\varrho^{m-2}).
\end{equation}
\end{lemm}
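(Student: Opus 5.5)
We work in the graph coordinates over the common tangent plane, pulled back to the coordinates $q$ of the hypothesis. The plan is to write the trace-free form $\mathring h_F$ of the surface as a smooth function of $(Du,D^2u)$, and then to subtract the identical expression for the comparison Wulff graph $\widetilde u$, for which $\mathring{\widetilde h}_F\equiv0$ because $d\widetilde\nu_F=r_0^{-1}\Id$ identically. This gives
\[
 \mathring h_F
 =\bigl[\Psi(Du,D^2u)-\Psi(D\widetilde u,D^2\widetilde u)\bigr]\circ(\text{coordinate change}),
\]
where $\Psi$ is the components of $\mathring h_F$ expressed through the graph data.

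For a graph $X=x_1e_1+x_2e_2+u\zeta$ with normal $\nu_u=V(Du)/\abs{V(Du)}$, the second fundamental form is
\[
 h_{ij}=\inner{\nu_u}{\zeta}\,u_{ij}.
\]
The relative metric is
\[
 g_F(X_i,X_j)=\inner{A_F(\nu_u)^{-1}X_i}{X_j},
\]
where $X_i=e_i+u_i\zeta$. Both depend only on $Du$, the first through $\nu_u$ and the second through $\nu_u$ and $X_i$. So
\[
 \Psi(p,Q)_{ij}=\beta(p)Q_{ij}-\tfrac{H_F}{2}\,\gamma_{ij}(p),
\]
with $\beta$ and $\gamma_{ij}$ of class $C^{2,\alpha}$ (since $F\in C^{4,\alpha}$), and $\beta(0)=\inner{\nu(p)}{\zeta}\neq0$.

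The difference then splits as
\[
 \beta(Du)\,D^2w+\bigl[\beta(Du)-\beta(D\widetilde u)\bigr]D^2\widetilde u-\tfrac{H_F}{2}\bigl[\gamma(Du)-\gamma(D\widetilde u)\bigr].
\]
We estimate the three terms in turn.
\begin{itemize}
\item By the mean value theorem, the last two terms are $O(\abs{Dw})=O(\varrho^{m-1})$, since $D^2\widetilde u$ is bounded. Because $m-1>m-2$, they are $o(\varrho^{m-2})$.
\item In the first term, $\beta(Du)=\beta(0)+O(\varrho)$ and $D^2w=O(\varrho^{m-2})$. Hence
\[
 \beta(Du)D^2w=\beta(0)D^2P_m+o(\varrho^{m-2}).
\]
\end{itemize}
In the graph coordinates this gives $\mathring h_F=c_0D^2P_m+o(\varrho^{m-2})$ with $c_0=\inner{\nu(p)}{\zeta}$.

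The main obstacle is the phrase ``in the same coordinates''. In the boundary case, the expansion of Lemma~\ref{lem:boundary-expansion} holds in $C^{2,\alpha}$ coordinates $z=\phi(s,t)$ rather than in the linear graph coordinates. The components of the $(0,2)$-tensor $\mathring h_F$ transform as
\[
 \mathring h_F(\partial_{z_a},\partial_{z_b})=\partial_a x^i\,\partial_b x^j\,\mathring h_{ij}.
\]
The Hessian of $w$ transforms the same way plus a first-order term $\partial_a\partial_b x^k\,w_k$. That extra term is $O(\abs{Dw})=O(\varrho^{m-1})=o(\varrho^{m-2})$, which is why $C^2$ regularity of the coordinate change suffices. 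So the transformed Hessian in $z$-coordinates equals $D^2P_m+o(\varrho^{m-2})$, and the tensor identity from the previous step transfers. The only change is that the leading coefficient picks up the Jacobian factors $\partial_a x^i\,\partial_b x^j$. These equal $D\phi^{-1}(0)$ plus $O(\varrho)$, which is absorbed into the error.

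We read the statement with this understanding: $\mathring h_F$ is the coordinate matrix in the given chart and $D^2P_m$ is the Hessian there. Equality up to the constant $c_0\neq0$ then holds with the same $c_0=\inner{\nu(p)}{\zeta}$.
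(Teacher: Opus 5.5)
Your argument is correct and is essentially the paper's proof. It uses the same four steps:
\begin{enumerate}
 \item the exact graph formula for $h$ and the fact that $g_F$ depends only on $Du$;
 \item subtraction of the Wulff graph, where $\mathring h_F\equiv0$;
 \item mean-value estimates giving $\mathring h_F=c_0D^2w+O(\abs{Dw}+\varrho\abs{D^2w})=c_0D^2w+O(\varrho^{m-1})$;
 \item the chain-rule term $\sum_k w_kD^2\Psi_k=O(\varrho^{m-1})$, which lets you pass to the nonlinear boundary coordinates.
\end{enumerate}

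One sign needs fixing. The paper uses the convention $h=\inner{d\nu\,\cdot}{\cdot}$, and this is exactly what makes $\mathring h_F=h-\frac{H_F}{2}g_F$, so that the Wulff term cancels. Differentiating $\inner{\nu_u}{X_j}=0$ under this convention gives $h_{ij}=-\inner{\nu_u}{\zeta}u_{ij}$. Your sign would spoil the cancellation; with the correct one, $c_0=-\inner{\nu(p)}{\zeta}\neq0$ and nothing else in your argument changes.
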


\begin{proof}
We work in fixed graph coordinates. In the boundary case, let
$v=w\circ\Psi$, where $\Psi$ is the nonlinear
$C^{2,\alpha}$ change of variables used in
Lemma~\ref{lem:boundary-expansion}. The chain rule gives
\[
 D^2v
 =(D\Psi)^T(D^2w\circ\Psi)D\Psi
 +\sum_{k=1}^2(w_k\circ\Psi)D^2\Psi_k.
\]
The last sum is $O(\varrho^{m-1})=o(\varrho^{m-2})$ and therefore does
not affect the leading term.

Let $\zeta$ be the constant graph direction in these coordinates.
It is transverse to the tangent plane, so
$\inner{\zeta}{\nu(0)}\neq0$. Since
$Du(0)=D\widetilde u(0)$, one has
$\nu_u(0)=\nu_{\widetilde u}(0)=\nu(0)$.
The exact graph formula for the two second fundamental forms is
\begin{equation}\label{eq:II-graph-difference}
 h_u-h_{\widetilde u}
 =-\inner{\zeta}{\nu_u}D^2u
  +\inner{\zeta}{\nu_{\widetilde u}}D^2\widetilde u.
\end{equation}
Since the coefficient $\inner{\zeta}{\nu_u}$ depends smoothly on $Du$,
\[
 \inner{\zeta}{\nu_u}-\inner{\zeta}{\nu_{\widetilde u}}
 =O(\abs{Dw}),\qquad
 \inner{\zeta}{\nu_u}-\inner{\zeta}{\nu(0)}=O(\varrho),
\]
where the second estimate follows from the $C^2$ regularity of the graph
and $Du(0)=D\widetilde u(0)$. Substituting
$D^2u=D^2\widetilde u+D^2w$ into
\eqref{eq:II-graph-difference} therefore gives
\begin{equation}\label{eq:II-graph-leading-difference}
 h_u-h_{\widetilde u}
 =-\inner{\zeta}{\nu(0)}D^2w
  +O\bigl(\abs{Dw}+\varrho\abs{D^2w}\bigr).
\end{equation}
For either graph,
\[
 \mathring h_F=h-\frac{H_F}{2}g_F,
\]
as follows directly by substituting $d\nu_F=A_Fd\nu$ into the definition
of $g_F$. The comparison Wulff shape has
$\mathring h_F\equiv0$. Expressing both forms in the fixed graph
coordinates gives
\[
 \mathring h_F=(h_u-h_{\widetilde u})
   -\frac{H_F}{2}(g_F^u-g_F^{\widetilde u}).
\]
The coefficients of $g_F$ depend smoothly on the first derivatives of
the graph function. Combining
this observation with \eqref{eq:II-graph-leading-difference} yields
\begin{equation*}
 \mathring h_F=-\inner{\zeta}{\nu(0)}D^2w
 +O\bigl(\abs{Dw}+\varrho\abs{D^2w}\bigr).
\end{equation*}
Since $Dw=O(\varrho^{m-1})$ and $D^2w=O(\varrho^{m-2})$, the remainder
term in this formula satisfies
\[
 O\bigl(\abs{Dw}+\varrho\abs{D^2w}\bigr)
 =O(\varrho^{m-1})=o(\varrho^{m-2}).
\]
Substituting $D^2w=D^2P_m+o(\varrho^{m-2})$ proves
\eqref{eq:tracefree-hF-leading-hessian} with
$c_0=-\inner{\zeta}{\nu(0)}\neq0$.
\end{proof}

\begin{lemm}\label{lem:analytic-geometric-compatibility}
Let $p$ be a boundary A-umbilic and use the graph parametrization
\eqref{eq:adapted-graph}. Let $A=(a^{ij})$ be defined by
\eqref{eq:aij-def}. Assume that
alternative~\textup{(b)} of Lemma~\ref{lem:boundary-expansion} holds, and let
$m\geq3$ be the degree of the polynomial
$P_m=c\Rea(z^m)$ in \eqref{eq:even-harmonic-polynomial}.

On $t=0$, define
\begin{equation*}
 \beta_A(s):=\frac{a^{12}(s,0)}{a^{22}(s,0)},\qquad
 \beta_F(s):=
 \frac{G_{q_1q_2}(Du(s,0))}{G_{q_2q_2}(Du(s,0))}.
\end{equation*}
Then
\begin{equation}\label{eq:two-boundary-directions-close}
 \beta_A(s)-\beta_F(s)=O(\abs{s}^{m-1}),\qquad
 \partial_s(\beta_A-\beta_F)(s)=O(\abs{s}^{m-2}).
\end{equation}
\end{lemm}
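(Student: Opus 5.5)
The plan is to compare the two quotients through the difference of their arguments on the boundary line. By \eqref{eq:aij-def},
\[
 a^{ij}(s,0)=\int_0^1 G_{q_iq_j}\bigl(D\widetilde u(s,0)+\theta Dw(s,0)\bigr)\dd\theta ,
\]
whereas $\beta_F$ evaluates $G_{q_iq_j}$ at the single point $Du(s,0)=D\widetilde u(s,0)+Dw(s,0)$. For every $\theta\in[0,1]$ both arguments lie within $\abs{Dw(s,0)}$ of $Du(s,0)$, since $\abs{q_\theta-Du}=(1-\theta)\abs{Dw}\le\abs{Dw}$. The whole question therefore reduces to how fast $Dw$ and its tangential derivative decay along $t=0$.

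\textbf{Step 1 (zeroth order).} Since $F\in C^{4,\alpha}$, the function $G$ is $C^{4,\alpha}$ near the origin, so $G_{q_iq_j}$ is $C^{2,\alpha}$ and in particular Lipschitz on $\abs q\le M$. This gives
\[
 \abs{a^{ij}(s,0)-G_{q_iq_j}(Du(s,0))}\le C\abs{Dw(s,0)} .
\]
By \eqref{eq:G-uniform-ellipticity}, both $a^{22}$ and $G_{q_2q_2}$ are bounded below by $\lambda_M>0$. The map $(x,y)\mapsto x/y$ is smooth on $\{y\ge\lambda_M\}$, so $\abs{\beta_A-\beta_F}\le C\abs{Dw(s,0)}$. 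Lemma~\ref{lem:boundary-expansion}(b) gives $Dw=DP_m+o(\varrho^{m-1})=O(\varrho^{m-1})$ in the adapted coordinates $z$. These coordinates come from a $C^{2,\alpha}$ diffeomorphism $\Psi$ that fixes the origin and preserves the line $t=0$. Hence on the boundary $\varrho\simeq\abs s$, and by the chain rule $Dw$ in the original $(s,t)$ variables is also $O(\abs s^{m-1})$. This is the first estimate in \eqref{eq:two-boundary-directions-close}.

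\textbf{Step 2 (derivative).} Differentiate in $s$:
\[
 \partial_s a^{ij}(s,0)=\int_0^1 G_{q_iq_jq_k}(q_\theta)\,\partial_s(q_\theta)_k\dd\theta,
 \qquad
 \partial_s\bigl[G_{q_iq_j}(Du)\bigr]=G_{q_iq_jq_k}(Du)\,\partial_s u_k .
\]
Write $\partial_s q_\theta=\partial_s Du-(1-\theta)\partial_s Dw$ and subtract. The difference splits into two terms:
\[
 \int_0^1\bigl[G_{q_iq_jq_k}(q_\theta)-G_{q_iq_jq_k}(Du)\bigr]\partial_s u_k\dd\theta
 \;-\;\int_0^1(1-\theta)\,G_{q_iq_jq_k}(q_\theta)\,\partial_s w_k\dd\theta .
\]
Since $G_{q_iq_jq_k}\in C^{1,\alpha}$ is Lipschitz and $D^2u$ is bounded, the first term is $O(\abs{Dw})=O(\abs s^{m-1})$. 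The second term is $O(\abs{D^2w(s,0)})$. By \eqref{eq:bd-exp-2}, transferred back through $\Psi$ as in the proof of Lemma~\ref{lem:tensor-transfer}, $D^2w=O(\varrho^{m-2})$; the correction term involving $D^2\Psi$ is even smaller, of order $O(\varrho^{m-1})$. Thus $\partial_s(a^{ij}-G_{q_iq_j}(Du))=O(\abs s^{m-2})$ on $t=0$. Differentiating the quotient and using the Step~1 bound for the undifferentiated factors then gives $\partial_s(\beta_A-\beta_F)=O(\abs s^{m-2})$.

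\textbf{Main obstacle.} The delicate point is the transfer of the asymptotics from the $z$-coordinates back to $(s,t)$ restricted to $t=0$. Two facts are needed. First, $\Psi$ must preserve the boundary line and satisfy $\abs{\Psi(q)}\simeq\abs q$. Second, $\Psi$ must be $C^2$ up to the boundary, so that the chain-rule terms stay bounded. I would verify both from the construction in Appendix~\ref{app:boundary-expansion}: the change of variables straightens the conormal direction and is $C^{2,\alpha}$ because $A\in C^{2,\alpha}$. The regularity of $F$ must be checked carefully in Step~2, where $C^{4,\alpha}$ is what makes $G_{q_iq_jq_k}$ Lipschitz. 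The estimate is only an upper bound, so the sharp leading term of $P_m$ is not needed.
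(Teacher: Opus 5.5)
Your proposal is correct and follows essentially the same route as the paper. Both arguments use $\abs{q_\theta-Du}\le\abs{Dw}$ to bound $A-D^2G(Du)$ by $C\abs{Dw}$ and its tangential derivative by $C(\abs{Dw}+\abs{D^2w})$, using that $D^2G\in C^{2,\alpha}$. Both then transfer the decay $Dw=O(\abs{s}^{m-1})$, $D^2w=O(\abs{s}^{m-2})$ from the boundary expansion through a coordinate change with bounded first and second derivatives, and conclude by the quotient rule with the uniform lower bounds on $a^{22}$ and $G_{q_2q_2}$.

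The one difference is cosmetic. The paper first writes $a^{ij}-G_{q_iq_j}(Du)$ as a double integral of $G_{q_iq_jq_k}(Du-\sigma Dw)\,w_k$ and then differentiates. You differentiate $a^{ij}$ directly and split the result into two terms. Both need the same regularity, namely Lipschitz third derivatives of $G$.
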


\begin{proof}
On $t=0$, set
\[
 N_A:=\partial_t+\beta_A(s)\partial_s,
 \qquad
 N_F:=\partial_t+\beta_F(s)\partial_s.
\]
Since $a^{21}=a^{12}$, the boundary condition
\eqref{eq:linear-conormal} becomes
\[
 a^{2j}w_j=a^{22}N_Aw=0.
\]

We first show that $dX(N_F)$ is $g_F$-orthogonal to the boundary
tangent. Write $X_i=e_i+u_i\bar\nu$, where
$(e_1,e_2)=(\tau,\eta)$, and let
$M_u=(g_F(X_i,X_j))$. Let $Z_1,Z_2$ be the Euclidean dual basis,
so that $\inner{Z_i}{X_j}=\delta_{ij}$. By the definition of $g_F$,
\[
 \bigl(\inner{A_F(\nu)Z_i}{Z_j}\bigr)_{ij}=M_u^{-1}.
\]
The tangential projections of $\partial_{q_1}V=-b\tau$ and
$\partial_{q_2}V=-E_3$ are $-bZ_1$ and $-bZ_2$, respectively, since
\[
 \inner{\partial_{q_i}V}{X_j}=-b\delta_{ij}.
\]
Thus \eqref{eq:G-Hessian-exact} gives
\[
 D^2G(Du)=\frac{b^2}{\abs{V(Du)}}M_u^{-1}.
\]
Writing $M_u=(m_{ij})$, we obtain
\[
 \beta_F
 =\frac{G_{q_1q_2}(Du)}{G_{q_2q_2}(Du)}
 =-\frac{m_{12}}{m_{11}},
 \qquad
 g_F(X_1,X_2+\beta_FX_1)=0.
\]
Hence $dX(N_F)=X_2+\beta_FX_1$ spans the same line as
$\eta_F$.

Along $t=0$, we have
\[
 q_\theta=D\widetilde u+\theta Dw
 =Du-(1-\theta)Dw,
 \qquad
 A=\int_0^1D^2G(q_\theta)\dd\theta.
\]
The coordinate changes in Lemma~\ref{lem:boundary-expansion}
and their inverses have bounded first and second derivatives near
the origin. The expansions in that lemma therefore give
\[
 \abs{Dw(s,0)}\leq C\abs{s}^{m-1},
 \qquad
 \abs{D^2w(s,0)}\leq C\abs{s}^{m-2}
\]
in the $(s,t)$-coordinates of \eqref{eq:adapted-graph}.
By the fundamental theorem of calculus,
\[
 a^{ij}-G_{q_iq_j}(Du)
 =-\int_0^1\int_0^{1-\theta}
 G_{q_iq_jq_k}(Du-\sigma Dw)w_k\dd\sigma\dd\theta.
\]
Since $D^2G\in C^{2,\alpha}$, this identity and its tangential
derivative yield
\begin{align}
 \abs{A-D^2G(Du)}
 &\leq C\abs{Dw}=O(\abs{s}^{m-1}),
 \label{eq:A-D2G-zero-order}\\
 \abs{\partial_s\bigl(A-D^2G(Du)\bigr)}
 &\leq C\bigl(\abs{Dw}+\abs{D^2w}\bigr)
 =O(\abs{s}^{m-2}).
 \label{eq:A-D2G-first-order}
\end{align}
On a sufficiently small boundary interval, uniform ellipticity gives
\[
 a^{22}\geq\lambda,\qquad
 G_{q_2q_2}(Du)\geq\lambda
 \qquad(\lambda>0).
\]
The matrix entries and their first tangential derivatives are bounded
on this interval. Applying the quotient rule to
\eqref{eq:A-D2G-zero-order}--\eqref{eq:A-D2G-first-order}
proves \eqref{eq:two-boundary-directions-close}. In particular,
$\beta_A(0)=\beta_F(0)$.

We now derive the reflection estimate used in the boundary index
calculation. For $\abs x$ and $\abs y$ small, define
\[
 \Psi_A^0(x,y)=(x+\beta_A(x)y,y),
 \qquad
 \Psi_F^0(x,y)=(x+\beta_F(x)y,y).
\]
Along $y=0$,
\[
 D\Psi_A^0(\partial_y)=N_A,
 \qquad
 D\Psi_F^0(\partial_y)=N_F.
\]
In the original $(s,t)$-coordinates, the second map defines the
reflection
\[
 (\sigma+\beta_F(\sigma)t,t)
 \longmapsto
 (\sigma-\beta_F(\sigma)t,-t).
\]

In the $\Psi_A^0$-coordinates, the coefficient matrix is diagonal and
positive definite at the origin. Let $L$ be the fixed positive
diagonal map used in the proof of
Lemma~\ref{lem:boundary-expansion} to make this matrix a positive
multiple of the identity, and set
\[
 \Psi_A:=\Psi_A^0\circ L,
 \qquad
 \Psi_F:=\Psi_F^0\circ L.
\]
Since $L$ commutes with $R=\operatorname{diag}(1,-1)$, the reflection
for the linear boundary condition is $R$ in the normalized
$\Psi_A$-coordinates. In these coordinates, the reflection defined
using $\Psi_F$ is
\[
 \Theta:=\Psi_A^{-1}\circ\Psi_F,
 \qquad
 \mathscr R_F:=\Theta\circ R\circ\Theta^{-1}.
\]

Set $\delta=\beta_A-\beta_F$. Direct differentiation gives
\[
 \Psi_A^0-\Psi_F^0=(\delta(x)y,0),
 \qquad
 D\Psi_A^0-D\Psi_F^0
 =\begin{pmatrix}
   y\delta'(x)&\delta(x)\\
   0&0
  \end{pmatrix}.
\]
Fix $0<c_1<c_2$. On $c_1r\leq\abs q\leq c_2r$, the estimates in
\eqref{eq:two-boundary-directions-close} give
\[
 \Psi_A(q)-\Psi_F(q)=O(r^m),
 \qquad
 D\Psi_A(q)-D\Psi_F(q)=O(r^{m-1}).
\]
Both maps are local $C^2$ diffeomorphisms: indeed,
\[
 \det D\Psi_A^0=1+\beta_A'(x)y,
 \qquad
 \det D\Psi_F^0=1+\beta_F'(x)y,
\]
and these determinants are nonzero near the origin. Since $\Psi_A$
and its inverse have bounded first and second derivatives there, the
preceding estimates imply
\[
 \begin{aligned}
  \Theta(q)&=q+O(r^m),
  &D\Theta(q)&=I+O(r^{m-1}),\\
  \Theta^{-1}(q)&=q+O(r^m),
  &D\Theta^{-1}(q)&=I+O(r^{m-1}).
 \end{aligned}
\]
Consequently,
\begin{equation*}
 \mathscr R_F(q)=Rq+O(r^m),
 \qquad
 D\mathscr R_F(q)=R+O(r^{m-1})
 \quad\text{as }r\downarrow0,
\end{equation*}
uniformly for $q=(x,y)$ with
$y\geq0$ and $c_1r\leq\abs q\leq c_2r$.
\end{proof}

\section{The index argument on the oriented double}

In this section, we use the two null line fields of $\mathring h_F$ and
the oriented double of the parameter disk to prove that
$\mathring h_F\equiv0$.

\begin{lemm}\label{lem:model-null-index}
Let $m\geq3$, $c\neq0$, and $P_m=c\Rea(z^m)$. For every $c_0\neq0$,
the symmetric matrix $c_0D^2P_m(q)$ has, at every $q\neq0$, the two
eigenvalues
\begin{equation*}
 \pm\abs{c_0c}m(m-1)\abs q^{m-2}.
\end{equation*}
Thus the associated symmetric bilinear form is nondegenerate and has
signature $(1,1)$. For each $q\neq0$, the set of nonzero vectors $v$
satisfying
$c_0D^2P_m(q)(v,v)=0$ is the union of exactly two distinct
one-dimensional subspaces of $\mathbb R^2$. Each of the resulting two
continuous line fields on $\mathbb R^2\setminus\{0\}$ has index
\begin{equation}\label{eq:negative-index}
 \ind_0=-\frac{m-2}{2}<0.
\end{equation}
\end{lemm}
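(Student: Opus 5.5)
We compute the Hessian of $P_m$ explicitly with the Wirtinger calculus and then read off the null directions and their winding.

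\textbf{The Hessian.} Write $z=x+iy=\abs q e^{i\phi}$. For the harmonic function $P_m=c\Rea(z^m)$, the Hessian is trace-free and symmetric. One has $\partial_{xx}P_m=c\,m(m-1)\Rea(z^{m-2})$ and $\partial_{xy}P_m=-c\,m(m-1)\operatorname{Im}(z^{m-2})$, because $\partial_x\partial_y P_m=\Rea(i\,c\,m(m-1)z^{m-2})$. Hence
\[
 c_0D^2P_m(q)=c_0c\,m(m-1)\abs q^{m-2}
 \begin{pmatrix}\cos((m-2)\phi)&-\sin((m-2)\phi)\\ -\sin((m-2)\phi)&-\cos((m-2)\phi)\end{pmatrix}.
\]
The matrix in the display is a reflection matrix. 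It is trace-free with determinant $-1$, so its eigenvalues are $\pm1$. This gives the eigenvalues $\pm\abs{c_0c}m(m-1)\abs q^{m-2}$, which are nonzero for $q\neq0$, so the form has signature $(1,1)$.

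\textbf{The null directions.} A trace-free symmetric form with nonzero eigenvalues $\pm\lambda$ vanishes on exactly two lines, namely the bisectors of its eigenlines. These lines are distinct, since the eigenvalues differ.

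To write them down, take $v=(\cos\psi,\sin\psi)$. Evaluating the quadratic form gives
\[
 \cos((m-2)\phi)\cos 2\psi-\sin((m-2)\phi)\sin2\psi=\cos\bigl((m-2)\phi+2\psi\bigr).
\]
This vanishes exactly when $2\psi=-(m-2)\phi\pm\pi/2 \pmod{2\pi}$. So the two null lines are
\[
 \psi_\pm(\phi)=-\frac{m-2}{2}\phi\pm\frac{\pi}{4}\pmod\pi.
\]
Both are continuous functions of $\phi$ modulo $\pi$, and neither depends on the sign of $c_0c$: that sign only swaps the roles of the eigenlines. The two null fields never meet, because their angles always differ by $\pi/2$ modulo $\pi$. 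Therefore each is a well-defined continuous line field on $\bR^2\setminus\{0\}$. Continuity of the labeling follows because each field is a continuous lift in $\phi$.

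\textbf{The index.} The index of a line field at $0$ is the total change of its angle $\psi$ as $\phi$ increases from $0$ to $2\pi$, divided by $2\pi$. Along the circle, $\psi_\pm$ changes by $-\frac{m-2}{2}\cdot2\pi$. This yields $\ind_0=-\frac{m-2}{2}$, which is negative since $m\geq3$, proving \eqref{eq:negative-index}.

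The only delicate point is the bookkeeping in the middle step: checking that each of the two null lines is followed consistently around the circle, rather than swapping with the other. This is settled by the fact that the two lines stay separated by exactly $\pi/2$. The remaining steps are direct computation.
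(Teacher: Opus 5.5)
Your proposal is correct and follows the paper's proof essentially step for step: the same Hessian $c\,m(m-1)\abs q^{m-2}$ times the reflection matrix in $(m-2)\phi$, the same reduction of the quadratic form to $\cos((m-2)\phi+2\psi)$, and the same null angles $-\frac{m-2}{2}\phi+\frac\pi4+\frac{k\pi}{2}$, whose change of $-(m-2)\pi$ around the circle gives the index. One small correction to your closing remark: a constant $\pi/2$ separation only shows that the two lines never coincide, since a pair of orthogonal lines turning by a net $\pi/2$ would swap; what actually prevents swapping is that the net change $-(m-2)\pi$ lies in $\pi\mathbb Z$, which your explicit formula already provides.
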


\begin{proof}
Write $z=\varrho e^{i\theta}$ and set
$C_m=cm(m-1)$ and $\alpha=(m-2)\theta$. Direct differentiation gives
\begin{equation}\label{eq:hessian-matrix-model}
 D^2P_m
 =C_m\varrho^{m-2}
 \begin{pmatrix}
  \cos\alpha&-\sin\alpha\\
  -\sin\alpha&-\cos\alpha
 \end{pmatrix}.
\end{equation}
The square of the matrix in \eqref{eq:hessian-matrix-model} is the
identity. Hence the eigenvalues of $c_0D^2P_m$ are
\begin{equation}\label{eq:model-Hessian-eigenvalues}
 \pm\abs{c_0c}m(m-1)\varrho^{m-2}.
\end{equation}
They are nonzero and have opposite signs for every $\varrho>0$. Hence
$c_0D^2P_m$ is nondegenerate and has signature $(1,1)$ on
$\mathbb R^2\setminus\{0\}$.

For $v_\varphi=(\cos\varphi,\sin\varphi)$,
\begin{equation*}
 D^2P_m(v_\varphi,v_\varphi)
 =cm(m-1)\varrho^{m-2}
   \cos\bigl((m-2)\theta+2\varphi\bigr).
\end{equation*}
The two null line fields may be represented by the continuous angle
functions
\begin{equation*}
 \varphi_k(\theta)
 =-\frac{m-2}{2}\theta+\frac\pi4+\frac{k\pi}{2},
 \qquad k=0,1,
\end{equation*}
where angles are understood modulo $\pi$. Thus
\begin{equation}\label{eq:model-null-angle-change}
 \varphi_k(2\pi)-\varphi_k(0)=-(m-2)\pi.
\end{equation}
Dividing by $2\pi$ proves \eqref{eq:negative-index}.
\end{proof}

\begin{prop}\label{prop:interior-index}
Let $p\in\mathbb D$ be an interior A-umbilic, and let
$\WulffR{r_0}{c_p}$ be its comparison Wulff shape. Exactly one of the
following alternatives holds:
\begin{enumerate}[label=\textup{(\roman*)}]
 \item there is a neighborhood $U$ of $p$ such that
 $X(U)\subset\WulffR{r_0}{c_p}$;
 \item $p$ is an isolated A-umbilic, and there is an integer $m\geq3$
 such that each of the two null line fields has index
 \begin{equation*}
  \ind_p=-\frac{m-2}{2}<0.
 \end{equation*}
\end{enumerate}
\end{prop}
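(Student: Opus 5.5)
The plan is to combine Lemma~\ref{lem:interior-graph-difference}, Lemma~\ref{lem:interior-expansion}, Lemma~\ref{lem:tensor-transfer} and Lemma~\ref{lem:model-null-index}.

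\textbf{Setting up the dichotomy.} Fix an interior A-umbilic $p$. Choose $\zeta=\nu(p)$ and write $X$ and the comparison Wulff shape $\WulffR{r_0}{c_p}$ as graphs $u,\widetilde u$ over $P=\nu(p)^\perp$. By Lemma~\ref{lem:interior-graph-difference}, the difference $w=u-\widetilde u$ solves $\partial_i(a^{ij}w_j)=0$. The coefficients are symmetric and uniformly elliptic on a small ball, and they lie in $C^{2,\alpha}\subset C^{1,\alpha}$ by Remark~\ref{rem:coefficient-regularity}. Moreover $w$ vanishes to second order at $0$, and $w\in C^{4,\alpha}$. Lemma~\ref{lem:interior-expansion} then gives two cases.

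\textbf{First alternative.} Either $w\equiv0$ near $0$, in which case the graphs coincide and $X(U)\subset\WulffR{r_0}{c_p}$ for a neighborhood $U$ of $p$, which is alternative~(i). Or $w$ has the stated expansion with a nonzero homogeneous harmonic polynomial $P_m$, $m\ge3$. A nonzero homogeneous harmonic polynomial of degree $m$ in the plane has the form $c\Rea(e^{-i\theta_0}z^m)$, so a rotation of coordinates reduces it to $c\Rea(z^m)$. The coordinate change from Lemma~\ref{lem:interior-expansion} is linear and orientation-preserving. Indices of line fields are invariant under orientation-preserving diffeomorphisms, and linear changes preserve the form of the expansions.

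\textbf{Locating the index.} In the second case, Lemma~\ref{lem:tensor-transfer} gives $\mathring h_F=c_0D^2P_m+o(\varrho^{m-2})$ with $c_0\ne0$. By Lemma~\ref{lem:model-null-index}, the model form $c_0D^2P_m$ has eigenvalues $\pm|c_0c|m(m-1)\varrho^{m-2}$, which are uniformly separated from zero relative to $\varrho^{m-2}$. Hence, for small $\varrho>0$, $\varrho^{2-m}\mathring h_F$ is a small perturbation of a nondegenerate form of signature $(1,1)$. Two consequences follow:
\begin{enumerate}[label=\textup{(\alph*)}]
 \item $\mathring h_F\neq0$ on a punctured neighborhood, so $p$ is an isolated A-umbilic;
 \item the null lines of $\varrho^{2-m}\mathring h_F$ lie uniformly close, on the circle $\varrho=r$, to the null lines of the model.
\end{enumerate}
The null directions of a nondegenerate indefinite $2\times2$ form depend continuously on the form. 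So on a small circle the two null line fields of $\mathring h_F$ are homotopic through nonvanishing line fields to those of the model, and they have the same index $-(m-2)/2$.

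\textbf{Main obstacle: matching the labels.} The delicate point is that $\cL_\pm$ are labeled globally through $g_F$-rotations of the positive eigenline $E_+$ (Lemma~\ref{lem:global-null-label}), while the model null lines are labeled differently. I would handle this by deforming the metric: the pair $(g_F,\mathring h_F)$ restricted to the circle deforms linearly to $(g_F(p),\varrho^{2-m}\mathring h_F)$, which stays nondegenerate for small $r$, and then to the Euclidean pair with the model form. Throughout this deformation $\cL_+$ varies continuously and never coincides with $\cL_-$, so each field is carried to one of the model fields. Both model fields have the same index $-(m-2)/2$, so the labeling does not affect the result.

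Finally, the two alternatives are mutually exclusive. In case~(i), $\mathring h_F\equiv0$ on $U$, so $p$ is not isolated.
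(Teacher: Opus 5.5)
Your proposal is correct and follows the paper's proof essentially step by step: graph difference, Hartman--Wintner expansion, rotation to $c\Rea(z^m)$, Lemma~\ref{lem:tensor-transfer}, the half-eigenvalue perturbation bound giving isolation, and a homotopy to the model null line fields. The metric-deformation detour for matching labels is unnecessary and slightly off as written, since $\mathring h_F$ is not trace-free with respect to the frozen metric $g_F(p)$. Null lines depend only on the bilinear form, so following them along the paper's homotopy $c_0D^2P_m+tR$ suffices, and your closing observation that both model fields have index $-\frac{m-2}{2}$ already settles the labeling.
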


\begin{proof}
Take an interior A-umbilic $p$ as the origin and write the surface and
its comparison Wulff shape as graphs $u$ and $\widetilde u$ over their
common tangent plane. Lemma~\ref{lem:interior-graph-difference} gives
\eqref{eq:interior-linear-difference} and
\begin{equation*}
 w(0)=0,\qquad Dw(0)=0,\qquad D^2w(0)=0.
\end{equation*}

If $w\equiv0$ locally, alternative~\textup{(i)} holds. Suppose that
$w\not\equiv0$ in every neighborhood of the origin.
Lemma~\ref{lem:interior-expansion} then provides an orientation-preserving
linear coordinate system, an integer $m\geq3$, and a nonzero homogeneous
harmonic polynomial $P_m$ such that
\begin{equation*}
 D^2w=D^2P_m+o(\varrho^{m-2}),
 \qquad \varrho=\abs q.
\end{equation*}
The coordinate change preserves orientation and
hence the index. Lemma~\ref{lem:tensor-transfer} gives the corresponding
expansion for $\mathring h_F$:
\begin{equation*}
 \mathring h_F=c_0D^2P_m+R,
 \qquad c_0\neq0,
 \qquad \abs R=o(\varrho^{m-2}).
\end{equation*}

An orientation-preserving rotation gives
\[
 P_m=c\Rea(z^m)=c\varrho^m\cos(m\theta),
 \qquad c\neq0.
\]
By Lemma~\ref{lem:model-null-index}, the eigenvalues of the leading matrix are
$\pm\abs{c_0c}m(m-1)\varrho^{m-2}$.

Since $R=o(\varrho^{m-2})$, there is $\varrho_0>0$ such that, for
$0<\varrho\leq\varrho_0$,
\begin{equation}\label{eq:tracefree-hF-error-small-for-index}
 \abs R\leq\frac12\abs{c_0c}m(m-1)\varrho^{m-2}.
\end{equation}
By \eqref{eq:model-Hessian-eigenvalues} and
\eqref{eq:tracefree-hF-error-small-for-index}, the error is smaller than
half the absolute value of each eigenvalue of the leading matrix. Hence
the two eigenvalues of $\mathring h_F$ remain nonzero and have opposite
signs. Thus $\mathring h_F$ is nondegenerate and has signature $(1,1)$
on $0<\varrho\leq\varrho_0$, and $p$ is isolated.

Consider the homotopy
\begin{equation*}
 \mathring h_{F,t}:=c_0D^2P_m+tR,
 \qquad 0\leq t\leq1.
\end{equation*}
By \eqref{eq:tracefree-hF-error-small-for-index}, every
$\mathring h_{F,t}$ is nondegenerate and has signature $(1,1)$ on
$\{\varrho=r\}$ for
$0<r\leq\varrho_0$. For each $t$, it therefore has two distinct null
lines. Since these lines remain distinct, they can be followed
continuously as $t$ varies. At $t=0$, they are the two null line fields
of $c_0D^2P_m$; at $t=1$, they are $\cL_+$ and $\cL_-$.
Lemma~\ref{lem:model-null-index} and homotopy invariance
therefore give
\begin{equation*}
 \ind_p(\cL_+)=\ind_p(\cL_-)
 =-\frac{m-2}{2}<0.
\end{equation*}
\end{proof}

At an isolated boundary A-umbilic $p$, we join $\cL_+$ on a small
half-disk to $\cL_-$ on its reflected copy, using the $g_F$-orthogonal
boundary direction for the reflection. Proposition~\ref{prop:joachimsthal}
shows that the two lines agree along the boundary. We denote the index
of this local line field by $\ind_{\mathrm{dbl}}(p)$.

\begin{prop}
\label{prop:boundary-index}
Let $p\in\partial\mathbb D$ be a boundary A-umbilic, and let
$\WulffR{r_0}{c_p}$ be its comparison Wulff shape. Exactly one of the
following alternatives holds:
\begin{enumerate}[label=\textup{(\roman*)}]
 \item there is a neighborhood $U$ of $p$ relative to the closed disk
 such that $X(U)\subset\WulffR{r_0}{c_p}$;
 \item $p$ is an isolated A-umbilic, and there is an integer $m\geq3$ such
 that its doubled index is
 \begin{equation*}
  \ind_{\mathrm{dbl}}(p)=-\frac{m-2}{2}<0.
 \end{equation*}
\end{enumerate}
\end{prop}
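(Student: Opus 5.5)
We follow the interior argument of Proposition~\ref{prop:interior-index}, now working on a half-disk and with the doubled line field. Place the boundary A-umbilic $p$ at the origin and use the adapted graph parametrization \eqref{eq:adapted-graph} for both the surface and its comparison Wulff shape $\WulffR{r_0}{c_p}$. By \eqref{eq:linear-conormal} and \eqref{eq:boundary-second-order-contact}, the difference $w$ satisfies the hypotheses of Lemma~\ref{lem:boundary-expansion}. If alternative (a) of that lemma holds, then $u=\widetilde u$ near the origin relative to the closed half-disk, which gives alternative (i). From now on we assume alternative (b). Then there are boundary-preserving, orientation-preserving coordinates $z=x+iy$ in which $D^2w=D^2P_m+o(\varrho^{m-2})$ with $P_m=c\Rea(z^m)$ and $m\geq3$. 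Lemma~\ref{lem:tensor-transfer} converts this into $\mathring h_F=c_0D^2P_m+R$ with $\abs R=o(\varrho^{m-2})$ on the closed upper half-disk.

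\textbf{Isolation.} As in the interior case, for $0<\varrho\leq\varrho_0$ the error $R$ is at most half of each eigenvalue $\pm\abs{c_0c}m(m-1)\varrho^{m-2}$ from Lemma~\ref{lem:model-null-index}. Hence $\mathring h_F$ has signature $(1,1)$ on the punctured half-disk, and $p$ is isolated.

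\textbf{Index computation.} Choose the doubling reflection to be $\mathscr R_F$, the geometric reflection built from the $g_F$-conormal, written in the normalized coordinates. By Lemma~\ref{lem:analytic-geometric-compatibility}, it satisfies $\mathscr R_F(q)=Rq+O(r^m)$ and $D\mathscr R_F=R+O(r^{m-1})$ on annuli. The doubled field is $\cL_+$ on $\{y\geq0\}$ and $(\mathscr R_F)_*\cL_-$, transported from the reflected copy, on $\{y\leq0\}$. Proposition~\ref{prop:joachimsthal} makes the two pieces match on $y=0$, since $\tau$ and $\eta_F$ are eigendirections and the null lines are their $\pm\pi/4$ rotations. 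Because $\mathscr R_F$ reverses orientation, it exchanges the two null lines, so $\cL_-$ is glued to $\cL_+$.

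Now compare with the model. By the estimates on $\mathscr R_F$, on $\{y\leq0\}$ the pushed-forward form equals $c_0R^TD^2P_m(Rq)R+o(\varrho^{m-2})$. Since $P_m(x,-y)=P_m(x,y)$, this is $c_0D^2P_m(q)+o(\varrho^{m-2})$. The errors come from $D\mathscr R_F-R=O(r^{m-1})$, applied to a form of size $O(r^{m-2})$, and from the displacement $O(r^m)$ in the argument of $D^2P_m$. So on the full punctured disk, the doubled line field is a small perturbation of a null line field of $c_0D^2P_m$.

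\textbf{Homotopy.} Run the homotopy $c_0D^2P_m+tR_{\mathrm{dbl}}$ circle by circle, keeping the two null lines distinct. The one subtlety is continuity across $y=0$ during the homotopy. We keep the gluing consistent by deforming the upper and lower halves by the same $t$ and using that the model null lines of $D^2P_m$ are themselves symmetric under $R$ with the labels exchanged. Homotopy invariance together with Lemma~\ref{lem:model-null-index} then gives $\ind_{\mathrm{dbl}}(p)=-\frac{m-2}{2}$.

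\textbf{Main obstacle.} The hard part is this last gluing. We must check that the labels $\pm$ match along $y=0$ in the model, and that the error from using $\mathscr R_F$ instead of $R$ is uniformly $o(\varrho^{m-2})$ including the derivative term. This is exactly why the estimates of Lemma~\ref{lem:analytic-geometric-compatibility} are needed.
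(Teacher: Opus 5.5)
Your proposal is correct and follows essentially the paper's route. It uses the reduction via Lemma~\ref{lem:boundary-expansion} and Lemma~\ref{lem:tensor-transfer}, the same eigenvalue bound for isolation, and doubling through $\mathscr R_F$ with Joachimsthal matching on $y=0$; there, the exchange of $\cL_\pm$ comes from $D\mathscr R_F$ fixing $\tau$ and reversing the $\eta_F$-line, not from orientation reversal alone. It then compares with the model via $D^2P_m(q)=RD^2P_m(q^*)R$ and Lemma~\ref{lem:analytic-geometric-compatibility}.

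The paper finishes differently only in form: it shows the rescaled doubled field converges uniformly to one continuous model null line field $\cM$ and rotates through the smaller angle, rather than running a homotopy of forms, and the two are equivalent. The label check you flag is automatic once $\widehat\cL$ is continuous across $y=0$, because a continuous line field uniformly close to the model null cone must stay near one of its two globally distinct null line fields.
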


\begin{proof}
Let $(x,y)$ be the normalized boundary coordinates of
Lemma~\ref{lem:analytic-geometric-compatibility}, with $p=(0,0)$,
$y\geq0$ on the disk, and $\varrho=\sqrt{x^2+y^2}$. Let $w$ be the
graph difference in Lemma~\ref{lem:boundary-expansion}. If
$w$ vanishes in a neighborhood of the origin relative to the closed
half-disk, alternative~\textup{(i)} holds. Suppose that it does not. Then
Lemma~\ref{lem:boundary-expansion} and Lemma~\ref{lem:tensor-transfer}
give
\begin{equation}\label{eq:boundary-index-tracefree-hF-expansion}
 \mathring h_F=c_0D^2P_m+\mathcal E,
 \qquad
 P_m=c\Rea(z^m),\qquad
 \abs{\mathcal E}=o(\varrho^{m-2}),\qquad c_0c\neq0.
\end{equation}
By Lemma~\ref{lem:model-null-index}, the two eigenvalues of the leading matrix
are $\pm\abs{c_0c}m(m-1)\varrho^{m-2}$.
Since $\abs{\mathcal E(x,y)}=o(\varrho^{m-2})$ as $(x,y)\to(0,0)$
with $y\geq0$, there is $\delta>0$ such that
\[
 \abs{\mathcal E(x,y)}
 \leq \frac12\abs{c_0c}m(m-1)\varrho^{m-2}
 \qquad (y\geq0,\ 0<\varrho<\delta).
\]
Thus the two eigenvalues of $\mathring h_F$ have opposite signs at every
such point. In particular, its two null lines are defined there, and
$p$ is the only A-umbilic in this neighborhood.

To compute the index of the geometric double, set
\begin{equation*}
 S^1_+:=\{q=(q_1,q_2):\abs q=1,\ q_2\geq0\},
 \qquad
 S^1_-:=\{q=(q_1,q_2):\abs q=1,\ q_2\leq0\}.
\end{equation*}
For $q=(q_1,q_2)\in S^1_-$, write $q^*:=(q_1,-q_2)\in S^1_+$.
For small $r>0$, define a loop in the doubled coordinate neighborhood by
\begin{equation*}
 \Gamma_r(q):=
 \begin{cases}
  rq,&q\in S^1_+,\\
  \mathscr R_F(rq^*),&q\in S^1_-.
 \end{cases}
\end{equation*}
The doubled line field along this loop is
\begin{equation}\label{eq:boundary-doubled-line-field}
 (\widehat\cL_r)_{\Gamma_r(q)}:=
 \begin{cases}
  \cL_+(rq),&q\in S^1_+,\\
  D\mathscr R_F(rq^*)\cL_-(rq^*),&q\in S^1_-.
 \end{cases}
\end{equation}
The reflection $\mathscr R_F$ fixes the boundary line and reverses the
$g_F$-conormal direction. The two definitions of $\Gamma_r$ therefore
coincide at $q=(\pm1,0)$. Proposition~\ref{prop:joachimsthal} shows that
the two definitions of $\widehat\cL_r$ also coincide at these points.
Hence $\widehat\cL_r$ is a continuous line field along $\Gamma_r$.
The coordinate basis identifies every tangent space in this chart with
$\mathbb R^2$; with this identification,
\eqref{eq:boundary-doubled-line-field} defines a continuous map
$S^1\to\mathbb{RP}^1$.

Equation~\eqref{eq:boundary-index-tracefree-hF-expansion} gives,
uniformly for $q\in S^1_+$,
\begin{equation*}
 r^{2-m}\mathring h_F(rq)\longrightarrow c_0D^2P_m(q).
\end{equation*}
Let $q_0=(1,0)$. As $r\to0$, the line $\cL_+(rq_0)$ converges to one of
the two null lines of $D^2P_m(q_0)$. Denote this limiting line by
$\cM(q_0)$. Since the two null lines of $D^2P_m$ are distinct at every
point of $S^1$, $\cM(q_0)$ extends uniquely to a continuous null line
field $\cM$ on $S^1$. Since the model eigenvalues are uniformly separated
from zero,
\begin{equation*}
 \cL_+(rq)\longrightarrow\cM(q)
 \qquad\text{uniformly for }q\in S^1_+.
\end{equation*}

For $q\in S^1_-$, the line $\cL_-(rq^*)$ converges to the model null
line different from $\cM(q^*)$. Since $P_m$ is even in $q_2$,
\begin{equation*}
 D^2P_m(q)=R D^2P_m(q^*)R,
 \qquad R=\operatorname{diag}(1,-1).
\end{equation*}
At $q_0$, the matrix $D^2P_m(q_0)$ is diagonal and $R$ interchanges its
two null lines. Continuity along $S^1_-$ therefore shows that $R$ sends
the limiting line of $\cL_-(rq^*)$ to $\cM(q)$. Lemma~\ref{lem:analytic-geometric-compatibility}
now gives
\begin{equation*}
 D\mathscr R_F(rq^*)\cL_-(rq^*)\longrightarrow\cM(q)
 \qquad\text{uniformly for }q\in S^1_-.
\end{equation*}
Thus $\widehat\cL_r\to\cM$ uniformly on $S^1$. For small $r$,
each line $\widehat\cL_r(q)$ can be rotated continuously through
the smaller angle to $\cM(q)$. Hence the two line fields have the
same index. Equation~\eqref{eq:model-null-angle-change} therefore gives
\[
 \ind_{\mathrm{dbl}}(p)
 =\ind_0(\cM)
 =-\frac{m-2}{2}.
\]
\end{proof}

\begin{lemm}
\label{lem:global-propagation}
Let $p\in\Dbar$, and let $U$ be a neighborhood of $p$ in $\Dbar$.
If $X(U)\subset\WulffR{r_0}{c_p}$, then
$X(\Dbar)\subset\WulffR{r_0}{c_p}$.
Consequently, $\mathring h_F\equiv0$.
\end{lemm}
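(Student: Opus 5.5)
We argue by a connectedness (open--closed) argument on the parameter disk. Let
\[
 \Omega:=\{q\in\Dbar:\ X(V)\subset\WulffR{r_0}{c_p}\ \text{for some neighborhood $V$ of $q$ in $\Dbar$}\}.
\]
By hypothesis $p\in\Omega$, so $\Omega\neq\emptyset$, and $\Omega$ is open by definition. Since $\Dbar$ is connected, it suffices to show that $\Omega$ is closed.

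\textbf{A preliminary observation.} On an open set where $X$ lies on $\WulffR{r_0}{c_p}$, the Euclidean normal $\nu$ agrees with $\widetilde\nu$ at the corresponding point. This holds up to sign a priori. The sign is locally constant, and it equals $+1$ near $p$, since the orientation of the model was fixed there. We may therefore assume the orientations agree throughout. Then $X(q)=c_p+r_0\Phi(\nu(q))$, so $c_q=X(q)-r_0\nu_F(q)=c_p$ for every $q\in\Omega$. In particular $d\nu_F=r_0^{-1}\Id$ on $\Omega$, and hence $\mathring h_F=0$ on $\Omega$.

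\textbf{Closedness of $\Omega$.} Let $q_k\in\Omega$ converge to $q\in\Dbar$. By continuity, $\mathring h_F(q)=0$, so $q$ is an A-umbilic. Moreover $c_q=\lim c_{q_k}=c_p$, so the comparison Wulff shape at $q$ is exactly $\WulffR{r_0}{c_p}$. Now apply Proposition~\ref{prop:interior-index} if $q\in\mathbb D$, or Proposition~\ref{prop:boundary-index} if $q\in\partial\mathbb D$.

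\begin{itemize}[label=--]
 \item Alternative (ii) says that $q$ is an isolated A-umbilic.
 \item This is impossible, because every $q_k$ lies in the open set $\Omega$ on which $\mathring h_F\equiv0$. Indeed, every neighborhood of $q$ contains a whole open set of A-umbilics near $q_k$.
 \item Hence alternative (i) holds: there is a neighborhood $U'$ of $q$ with $X(U')\subset\WulffR{r_0}{c_q}=\WulffR{r_0}{c_p}$.
\end{itemize}

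Thus $q\in\Omega$, and $\Omega$ is closed. By connectedness $\Omega=\Dbar$, so $X(\Dbar)\subset\WulffR{r_0}{c_p}$, and the preliminary observation gives $\mathring h_F\equiv0$.

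\textbf{Main point of care.} The step that needs attention is identifying the comparison shape at the limit point $q$ with the original one, that is, showing $c_q=c_p$. This uses the orientation consistency of the normals on $\Omega$, which rests on local constancy of the sign and connectedness of each component of $\Omega$. An alternative that avoids this is to observe that $c_q$ is a continuous function, and that $d c=dX-r_0\,d\nu_F=0$ wherever $\mathring h_F=0$ on an open set. Hence $c$ is locally constant on $\Omega$, and therefore constant on the component of $\Omega$ containing $p$. Running the open--closed argument with that component in place of $\Omega$ gives the same conclusion.
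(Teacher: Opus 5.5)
Your proof is correct and has the same skeleton as the paper's: an open--closed argument on the connected disk $\Dbar$, with unique continuation at a limit point giving closedness. The difference is in how the closedness step is carried out.

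The paper fixes the center $c$ and passes to the limit in $X-r_0\nu_F=c$ and $d\nu_F=r_0^{-1}dX$. This gives second-order contact with the fixed model at the limit point. At a boundary limit point it checks by hand that the fixed model satisfies the Young condition, so that the graph difference $w$ solves the conormal problem. It then reruns the expansions of Lemmas~\ref{lem:interior-expansion} and~\ref{lem:boundary-expansion}: a nontrivial expansion would give $D^2w(q_j)\neq0$ at points $q_j\to0$ near which $w\equiv0$.

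You instead use continuity of $q\mapsto c_q$ to identify the fixed model with the comparison shape at the limit point. This makes the Young-condition check automatic. You then quote the dichotomy already packaged in Propositions~\ref{prop:interior-index} and~\ref{prop:boundary-index}: an A-umbilic that is not isolated must fall under alternative~(i). This is legitimate, since those propositions are proved independently of the present lemma. Your route is shorter, while the paper's version is self-contained at the level of the PDE expansions.

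One point needs tidying. The sentence ``we may therefore assume the orientations agree throughout'' is not justified on all of $\Omega$. Local constancy of the sign only gives $+1$ on the component containing $p$. The alternative in your last paragraph does not really avoid this, because $dc=0$ at a point is equivalent to $\mathring h_F=0$ there. You obtained $\mathring h_F=0$ on $\Omega$ from that very orientation agreement.

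What works is your final suggestion, carried out from the start. Let $\Omega_0$ be the component of $\Omega$ containing $p$; it is open since $\Dbar$ is locally connected. On $\Omega_0$ the sign is $+1$, so $\mathring h_F=0$ and $c\equiv c_p$ there. In the closedness step, take the neighborhood $U'$ from alternative~(i) to be connected. Then $U'\subset\Omega$ and $U'$ meets $\Omega_0$, hence $U'\subset\Omega_0$. The paper avoids the issue by building orientation agreement with the fixed model into the definition of its set $\mathcal O$.

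In fact sign $-1$ cannot occur on any open set. With $\xi$ the model normal, one would have $d\nu_F=-r_0^{-1}A_F(-\xi)A_F(\xi)^{-1}$. Its trace has the sign opposite to $H_F=2/r_0$.
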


\begin{proof}
The radius $r_0=2/H_F$ is fixed, and the local agreement determines a
unique center $c$. Let $\mathcal O$ be the set of all $q\in\Dbar$ for
which there is a neighborhood $U_q$ such that
$X(U_q)\subset\WulffR{r_0}{c}$ and the orientation induced by $X$ agrees
with the orientation of $Y(\xi)=c+r_0\Phi(\xi)$. By assumption,
$\mathcal O$ is nonempty, and it is open. We prove that it is also
closed.

Suppose $p_j\in\mathcal O$ and $p_j\to p$. On $\mathcal O$ one has
\begin{equation}\label{eq:fixed-model-center-identity}
 X-r_0\nu_F=c,
 \qquad d\nu_F=r_0^{-1}dX.
\end{equation}
Passing to the limit shows that $X(p)\in\WulffR{r_0}{c}$, that the oriented
tangent planes agree at $p$, and that the surface and the fixed model
have the same second fundamental form there. Thus, in common graph
coordinates centered at $p$, their difference satisfies
$w(0)=0$, $Dw(0)=0$, and $D^2w(0)=0$.

If $p\in\mathbb D$, Lemma~\ref{lem:interior-graph-difference} gives
the uniformly elliptic equation \eqref{eq:interior-linear-difference}.
Suppose $p\in\partial\mathbb D$. Since
$X(p)\in E_3^\perp$ and
$\inner{\nu_F(p)}{-E_3}=\omega_0$, the $E_3$-component of
\eqref{eq:fixed-model-center-identity} gives
\begin{equation*}
 \inner{c}{E_3}
 =\inner{X(p)-r_0\nu_F(p)}{E_3}=r_0\omega_0.
\end{equation*}
For a point $Y(\xi)=c+r_0\Phi(\xi)$ on the fixed Wulff shape,
\begin{equation*}
 \inner{Y(\xi)}{E_3}
 =r_0\bigl(\omega_0+\inner{\Phi(\xi)}{E_3}\bigr).
\end{equation*}
Consequently,
\begin{equation*}
 Y(\xi)\in\partial\mathbb R^3_+
 \quad\Longleftrightarrow\quad
 \inner{\Phi(\xi)}{-E_3}=\omega_0.
\end{equation*}
Hence the fixed Wulff shape satisfies the Young condition along its
intersection with the support plane, and the graph difference satisfies
the conormal boundary condition
\eqref{eq:linear-conormal}.

For large $j$, the common graph neighborhood contains points
$q_j\to0$, $q_j\neq0$, corresponding to $p_j$, and $w$ vanishes on a
neighborhood of every $q_j$. If $w$ did not vanish near the origin, the
corresponding local expansion in Lemma~\ref{lem:interior-expansion} or
Lemma~\ref{lem:boundary-expansion} would give
\[
 D^2w(q)=D^2P_m(q)+E(q),
 \qquad P_m=c_m\Rea(z^m),\qquad c_m\neq0,\quad m\geq3,
\]
where
\begin{equation*}
 \frac{\abs{E(q)}}{\abs q^{m-2}}\longrightarrow0
 \qquad\text{as }q\longrightarrow0.
\end{equation*}
But the two eigenvalues of $D^2P_m(q)$ have absolute value
$\abs{c_m}m(m-1)\abs q^{m-2}$ for every $q\neq0$. Hence the expansion
implies, for all sufficiently large $j$,
\[
 \abs{D^2w(q_j)}
 \geq\frac12\abs{c_m}m(m-1)\abs{q_j}^{m-2}>0,
\]
contradicting the fact that $w$ vanishes near $q_j$. Therefore the
expansion alternative in Lemma~\ref{lem:interior-expansion} or
Lemma~\ref{lem:boundary-expansion} is impossible, and $w$ vanishes near
$p$. Thus $p\in\mathcal O$, proving that $\mathcal O$ is closed.

Connectedness of the disk gives $\mathcal O=\Dbar$. The oriented normal
then agrees everywhere with the model normal, so
$d\nu_F=r_0^{-1}dX$ and hence $\mathring h_F\equiv0$.
\end{proof}

Suppose $\mathring h_F\not\equiv0$.
Lemma~\ref{lem:global-propagation} rules out local agreement with a Wulff
surface. Propositions~\ref{prop:interior-index} and
\ref{prop:boundary-index} then show that every point of $\cU$ is isolated
and gives a negative index contribution on the oriented double.
Since $\cU$ is closed in the compact disk, it is finite.
Take two abstract copies $\Dbar_+$ and $\Dbar_-$ of the parameter disk
and identify corresponding boundary points. We orient
$\Dbar_+$ by the original orientation and $\Dbar_-$ by the opposite
orientation. Define
\begin{equation}\label{eq:oriented-double}
 \widehat\Sigma:=\Dbar_+\cup_{\partial\mathbb D}\Dbar_-
\end{equation}
and
\begin{equation*}
 \widehat\cU=
 \{p_+,p_-:p\in\cU\cap\mathbb D\}
 \cup(\cU\cap\partial\mathbb D).
\end{equation*}
The two copies have opposite orientations along their common boundary, so \(\widehat\Sigma\) is an oriented sphere; moreover, \(\widehat{\mathcal U}\) is finite.

\begin{defi}\label{def:Hopf-line-index}
Let $L$ be an unoriented line field with an isolated singularity $p$
on an oriented surface $M$. Choose a positively oriented small circle
$\gamma:[0,2\pi]\to M$ around $p$ and an oriented tangent frame
$(e_1,e_2)$ on a disk containing it. Write
\[
 L_{\gamma(t)}
 =\Span\{\cos\theta(t)e_1+\sin\theta(t)e_2\},
\]
where $\theta:[0,2\pi]\to\mathbb R$ is continuous. Define
\begin{equation*}
 \ind_p(L)
 :=\frac{\theta(2\pi)-\theta(0)}{2\pi}
 \in\frac12\mathbb Z.
\end{equation*}
The index is independent of these choices and is unchanged if the
orientation of $M$ is reversed.
\end{defi}

\begin{prop}
\label{prop:line-field-PH}
With the half-integer normalization of
Definition~\ref{def:Hopf-line-index}, if $M$ is a closed oriented surface
and $L$ is a continuous line field with only finitely many isolated
singularities, then
\begin{equation}\label{eq:line-field-PH-general}
 \sum_{q\in\Sing(L)}\ind_q(L)=\chi(M).
\end{equation}
\end{prop}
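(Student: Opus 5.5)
The plan is to reduce Proposition~\ref{prop:line-field-PH} to the classical Poincar\'e--Hopf theorem for vector fields by passing to the orientation double cover of the line field. Let $S=\Sing(L)$ and $M^\circ=M\setminus S$. Over $M^\circ$, consider the double cover
\[
 \widetilde M^\circ:=\{(x,v):x\in M^\circ,\ v\in L_x,\ \abs v=1\}
\]
with respect to some fixed Riemannian metric. Along the loop $\gamma$ of Definition~\ref{def:Hopf-line-index} around $q\in S$, the monodromy of this cover is trivial exactly when $\theta(2\pi)-\theta(0)\in2\pi\mathbb Z$, that is, when $2\ind_q(L)$ is even.

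I would then complete the cover to a branched double cover $\pi:\widetilde M\to M$, branched exactly over the points $q\in S$ with $2\ind_q(L)$ odd, by filling in punctures. If $L$ is globally orientable on $M^\circ$ and no branching occurs, then $\widetilde M$ may be disconnected, but the argument below still applies. On $\widetilde M^\circ$ the tautological section $(x,v)\mapsto v$, lifted via $d\pi$, is a nonvanishing vector field $V$.

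The local computation comes next. Near an unbranched singular point $q$, each of the two preimages carries an isolated zero of $V$ whose vector-field index equals $(\theta(2\pi)-\theta(0))/2\pi=\ind_q(L)$. Near a branched point, take a local coordinate $w$ on $\widetilde M$ with $z=w^2$. A loop once around $w=0$ covers $\gamma$ twice. In the $w$-frame the angle of $V$ changes by $2(\theta(2\pi)-\theta(0))$ minus the rotation of the frame $d\pi$, which contributes $2\pi$ because $dz=2w\,dw$. The index of $V$ at the preimage is therefore $2\ind_q(L)-1$. These computations are also where the claims in Definition~\ref{def:Hopf-line-index} get checked: independence of the choices follows from the homotopy invariance of winding numbers, and invariance under reversal of orientation holds because reversing both the loop direction and the frame orientation negates the angle twice.

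Finally, apply vector-field Poincar\'e--Hopf on the closed surface $\widetilde M$ (extending $V$ continuously and taking the sum over components if necessary), together with Riemann--Hurwitz:
\[
 \chi(\widetilde M)=2\chi(M)-b,
\]
where $b$ is the number of branch points. Summing the local indices gives
\[
 2\sum_{\text{unbranched}}\ind_q(L)+\sum_{\text{branched}}\bigl(2\ind_q(L)-1\bigr)=2\chi(M)-b.
\]
The $-b$ terms cancel, and dividing by $2$ yields \eqref{eq:line-field-PH-general}. An additional point must be verified here: since the total degree $\sum 2\ind_q(L)$ is even, $b$ is even, so the branched double cover exists globally. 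This follows from the monodromy homomorphism $\pi_1(M^\circ)\to\mathbb Z/2$ being well defined, because the product of all the small loops is trivial in homology.

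The main obstacle is the careful local index bookkeeping at the branch points, specifically the sign and the frame-rotation correction. Alternatively, one can bypass the covers altogether: take a smooth triangulation with one singularity per $2$-cell, compare $L$ with a reference vector field $Y$ by the relative angle, and note that the relative winding of a line against a vector field is half-integer-valued and additive. Then $\sum\ind_q(L)=\sum\ind(Y)=\chi(M)$, which is the argument I would fall back on if the branched-cover route becomes messy.
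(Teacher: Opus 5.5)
Your proof is correct, but it follows a different route from the paper. The paper gives no argument of its own: it simply invokes the classical Poincar\'e--Hopf theorem for fields of line elements from Hopf's lectures, which is treated there together with the total curvature $\int K\,dA$ of a closed surface. You instead pass to the orientation double cover of $L$ over $M\setminus\Sing(L)$ and fill in the punctures to get a branched double cover. This reduces the statement to the vector-field Poincar\'e--Hopf theorem plus Riemann--Hurwitz, and the bookkeeping checks out:
\begin{itemize}
\item each of the two preimages of an unbranched point has index $\ind_q(L)$;
\item a branch point has index $2\ind_q(L)-1$, because the lifted field is $v/(2w)$ in the $w$-chart;
\item $\chi(\widetilde M)=2\chi(M)-b$, so the $-b$ terms cancel.
\end{itemize}
Your route gives a purely topological proof that needs only continuity of $L$. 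That is exactly the regularity of the doubled field $\widehat\cL$ across the seam $\partial\mathbb D$ in the paper's application. It also makes the half-integer normalization transparent. The citation buys brevity and the connection with Gauss--Bonnet.

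Two cosmetic remarks. First, the ``additional point'' about the parity of $b$ is unnecessary. The double cover over $M\setminus\Sing(L)$ is constructed directly from $L$, and filling the punctures is a local operation, so existence is automatic and evenness of $b$ is a consequence. Your homological argument for that evenness is nonetheless correct. Second, $V$ does not literally extend continuously over the filled points: it has unit length near unbranched preimages and blows up like $1/(2w)$ at branch points. You should either apply Poincar\'e--Hopf for fields with isolated singularities directly, or first multiply $V$ by a nonnegative function vanishing exactly at those points; neither changes any winding number.

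Your fallback argument, comparing $L$ with a reference vector field via a half-integer relative angle on the complement of small disks, is also valid. It is closer to the classical comparison proof and avoids the branch-point bookkeeping.
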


\begin{proof}
This is the Poincar\'e--Hopf theorem for line fields with the
normalization in Definition~\ref{def:Hopf-line-index}; see
\cite[p.~113]{Hopf1983} .
\end{proof}

\begin{lemm}
\label{lem:doubled-null-line-field}
Use $\cL_+$ on $\Dbar_+$ and $\cL_-$ on $\Dbar_-$ to define
$\widehat\cL$. Then $\widehat\cL$ is a continuous unoriented line field on
$\widehat\Sigma\setminus\widehat\cU$.
\end{lemm}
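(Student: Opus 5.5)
Continuity is a local question, so the points of $\widehat\Sigma\setminus\widehat\cU$ split into two kinds. Interior points of either copy, $p_\pm$ with $p\in\mathbb D\setminus\cU$, are the easy case. Lemma~\ref{lem:global-null-label} gives that $\cL_+$ and $\cL_-$ are continuous unoriented line fields on $\Dbar\setminus\cU$. The copy $\Dbar_-$ is the same parameter disk with the opposite orientation, and being an unoriented line field does not depend on orientation. So $\widehat\cL$ is continuous near $p_+$ and near $p_-$. The only remaining work is at boundary points $p\in\partial\mathbb D\setminus\cU$, where the two copies are glued.

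At such a boundary point $p$ I use a smooth collar chart of the double. Concretely, I take boundary coordinates $(s,t)$ near $p$ on $\Dbar$ with $t\geq0$ on the disk. The chart on $\widehat\Sigma$ is $(s,t)$ on $\Dbar_+$ and $(s,-t)$ on $\Dbar_-$. I choose the gluing so that the $g_F$-conormal $\eta_F$ on the $+$ side is identified with $-\eta_F$ on the $-$ side, matching the reflection $\mathscr R_F$ used in Proposition~\ref{prop:boundary-index}. In this chart the differential of the identification is $R_F$, the linear map fixing $\tau$ and sending $\eta_F$ to $-\eta_F$. The continuity claim therefore becomes two separate facts. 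The first is that $\cL_+$ is continuous up to $t=0$ from $t\geq0$, and that $R_F\cL_-$ is continuous up to $t=0$ from the reflected side. Both follow from Lemma~\ref{lem:global-null-label}, since $\mathring h_F\neq0$ on a neighborhood of $p$ in $\Dbar$, $\cU$ being closed. The second fact is the matching condition $\cL_+(q)=R_F\cL_-(q)$ for $q\in\partial\mathbb D$ near $p$. Once the two one-sided limits agree on the gluing line, the pasted line field is continuous, by the pasting lemma applied to the continuous maps into $\mathbb{RP}^1$ on the two closed half-neighborhoods.

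The matching is the key step, and it comes from Proposition~\ref{prop:joachimsthal}. On $\partial\mathbb D$ we have $\mathring h_F(\tau,\eta_F)=0$, so in the $g_F$-orthonormal frame $(\tau,\eta_F)$ the form is diagonal. It is trace-free with respect to $g_F$ and nonzero, so it equals $\lambda\,\mathrm{diag}(1,-1)$ with $\lambda\neq0$. Its null lines are then $\Span\{\tau\pm\eta_F\}$, and $E_+$ is either $\Span\{\tau\}$ or $\Span\{\eta_F\}$. Fix the orientation convention $J_F\tau=\eta_F$ on $\Dbar_+$. Then $\cL_+=R^F_{\pi/4}E_+$ and $\cL_-=R^F_{-\pi/4}E_+$. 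The map $R_F$ is the $g_F$-reflection across $\tau$, and it fixes $E_+$, which is a coordinate axis of the frame. Conjugating a rotation by a reflection inverts it, so $R_F R^F_{-\pi/4}E_+=R^F_{\pi/4}R_F E_+=R^F_{\pi/4}E_+$. This gives exactly $R_F\cL_-=\cL_+$ along the boundary. The same bookkeeping shows why $\Dbar_-$ must carry the reversed orientation: the labels $\pm$ on $\Dbar_-$ are defined with the original rotation $J_F$, and it is the orientation reversal of the reflection that interchanges them.

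The main obstacle is to make sure the chart of $\widehat\Sigma$ at boundary points is well-defined, and that the identification is really through a reflection reversing $\eta_F$ rather than some other transversal. The line $\Span\{\eta_F\}$ varies with the point, so I will use the collar built from the $g_F$-conormal flow, namely $(\sigma,t)\mapsto$ the point reached from $\sigma\in\partial\mathbb D$ by moving in the direction $\eta_F$. I expect this collar to be $C^1$, which suffices for continuity of line fields. The topology of $\widehat\Sigma$ is independent of the choice of collar, so this chart is admissible. I will verify that its differential on $t=0$ is the reflection $R_F$, which completes the argument.
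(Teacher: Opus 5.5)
Your proposal is correct and is essentially the paper's argument. Both reduce to boundary points and use a collar chart in which $\partial_t$ is a positive multiple of $\eta_F$ along $t=0$, with the second copy attached via $t\mapsto -t$; both then use Proposition~\ref{prop:joachimsthal} to make $\mathring h_F$ diagonal there, so the reflection carries $\cL_-$ to $\cL_+$. The only cosmetic difference is in how the swap is seen: the paper writes the null lines as $\Span\{\partial_s\pm a\partial_t\}$ and uses only that $\cL_\pm$ are the two distinct ones, while you use $\cL_\pm=R^F_{\pm\pi/4}E_+$ with $E_+$ a frame axis and the fact that conjugating a rotation by a reflection inverts it.
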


\begin{proof}
By Lemma~\ref{lem:global-null-label}, the two line fields are globally
defined. Fix a non-A-umbilic boundary
point and choose boundary coordinates
$(s,t)$ on the original disk, where $t\geq0$, such that along $t=0$ one
has $\partial_s=\tau$ and $\partial_t$ is a positive multiple of the
anisotropic conormal $\eta_F$. Proposition~\ref{prop:joachimsthal}
gives
\[
 \mathring h_F(\partial_s,\partial_t)=0\qquad\text{on }t=0.
\]
Shrink the coordinate neighborhood so that $\mathring h_F$ is nonzero
on the boundary arc. Since it is trace-free, it has signature $(1,1)$
there. Along $t=0$, write its two diagonal coefficients as
\[
 \mathring h_{F,11}:=\mathring h_F(\partial_s,\partial_s),\qquad
\mathring h_{F,22}:=\mathring h_F(\partial_t,\partial_t).
\]
Then $\mathring h_{F,11}\mathring h_{F,22}<0$. Therefore
\begin{equation*}
 a:=\sqrt{-\frac{\mathring h_{F,11}}{\mathring h_{F,22}}}>0,
 \qquad
 \{[v]:\mathring h_F(v,v)=0\}
 =\Span\{\partial_s+a\partial_t\}
  \cup\Span\{\partial_s-a\partial_t\}.
\end{equation*}
For some $\varepsilon\in\{1,-1\}$, $\cL_+$ is spanned by
$\partial_s+\varepsilon a\partial_t$ and $\cL_-$ is spanned by
$\partial_s-\varepsilon a\partial_t$ on each boundary arc avoiding
$\cU$.

On the second copy, use the coordinate $y=-t$.
Since $\partial_t=-\partial_y$ there, the line selected from $\cL_-$
becomes
\begin{equation*}
 \Span\{\partial_s-\varepsilon a\partial_t\}
 =\Span\{\partial_s+\varepsilon a\partial_y\}.
\end{equation*}
This agrees with the line from $\cL_+$ on the first copy. Hence the two
fields combine to define a continuous line field $\widehat\cL$ on
$\widehat\Sigma\setminus\widehat\cU$.
\end{proof}

\begin{rema}
The construction doubles the parameter disk, not the immersed surface;
no reflection symmetry of $F$ or smooth extension of $\mathring h_F$ is
required. At a boundary A-umbilic,
Lemma~\ref{lem:analytic-geometric-compatibility} and
Proposition~\ref{prop:boundary-index} identify the local index of this
geometric double.
\end{rema}

\begin{lemm}
\label{lem:doubled-index-sum}
For each $p\in\cU$, let $m_p\geq3$ be the integer given by
Proposition~\ref{prop:interior-index} or
Proposition~\ref{prop:boundary-index}, according as $p$ is an interior
or boundary point. The singularities of $\widehat\cL$ are exactly the
points of $\widehat\cU$, and
\begin{equation}\label{eq:doubled-index-sum}
 \sum_{q\in\Sing(\widehat\cL)}\ind_q(\widehat\cL)
 =2\sum_{p\in\cU\cap\mathbb D}
       \left(-\frac{m_p-2}{2}\right)
  +\sum_{p\in\cU\cap\partial\mathbb D}
       \left(-\frac{m_p-2}{2}\right).
\end{equation}
The sum is strictly negative when
$\cU\neq\varnothing$ and is zero when $\cU=\varnothing$.
\end{lemm}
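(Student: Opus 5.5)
The plan is to identify the singular set of $\widehat\cL$ and then compute the index at each singular point, using the local results already established. Lemma~\ref{lem:doubled-null-line-field} shows that $\widehat\cL$ is continuous on $\widehat\Sigma\setminus\widehat\cU$, so every singularity lies in $\widehat\cU$.

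For the converse inclusion, fix $p\in\cU$. Since $\mathring h_F\not\equiv0$, Lemma~\ref{lem:global-propagation} excludes alternative~(i) in Propositions~\ref{prop:interior-index} and~\ref{prop:boundary-index}. Each point of $\widehat\cU$ is therefore an isolated singularity with a nonzero index, which is computed below. A nonzero index means $\widehat\cL$ cannot be extended continuously there, so these points are genuine singularities.

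\emph{Interior points.} Take $p\in\cU\cap\mathbb D$. On $\Dbar_+$ the field is $\cL_+$ with the original orientation, and Proposition~\ref{prop:interior-index} gives $\ind_{p_+}=-(m_p-2)/2$. On $\Dbar_-$ the field is $\cL_-$ with the reversed orientation. By Definition~\ref{def:Hopf-line-index}, reversing the orientation does not change the index. Proposition~\ref{prop:interior-index} gives the same value for $\cL_-$, so $\ind_{p_-}=-(m_p-2)/2$ as well. Together the two copies contribute $2\cdot(-(m_p-2)/2)$.

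\emph{Boundary points.} Take $p\in\cU\cap\partial\mathbb D$. The point to check is that the local field of $\widehat\cL$ near $p$ is the same as the doubled field used in Proposition~\ref{prop:boundary-index}, namely $\cL_+$ on the half-disk and $\cL_-$ transported by the reflection $\mathscr R_F$. I will verify this as follows.
\begin{enumerate}[label=\textup{(\alph*)}]
 \item The reflection $\mathscr R_F$ gives a chart for a neighborhood of $p$ in $\widehat\Sigma$. The reflected half is a parametrization of $\Dbar_-$ that reverses the original orientation, and this matches the orientation chosen on $\Dbar_-$.
 \item The line $D\mathscr R_F\,\cL_-$ is exactly $\cL_-$ expressed in this chart.
 \item The loop $\Gamma_r$ is positively oriented in $\widehat\Sigma$.
\end{enumerate}
The index is a homotopy invariant of the restriction of the field to a small loop, and it does not depend on the chart. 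Hence $\ind_p(\widehat\cL)=\ind_{\mathrm{dbl}}(p)=-(m_p-2)/2$, and each boundary umbilic contributes once.

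Summing these contributions gives \eqref{eq:doubled-index-sum}. Since $m_p\ge3$, every term is at most $-\tfrac12$, so the sum is strictly negative whenever $\cU\neq\varnothing$. If $\cU=\varnothing$, the sum is empty and equals zero.

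The main obstacle is the orientation bookkeeping in the boundary case. One must confirm that the chart built from $\mathscr R_F$ is compatible with the smooth structure and orientation of $\widehat\Sigma$. One must also confirm that traversing $\Gamma_r$ along $S^1_+$ and then $S^1_-$ is the positive direction. If the direction turned out to be reversed, the index would be unchanged anyway, because Definition~\ref{def:Hopf-line-index} makes the index insensitive to orientation. I will still note this explicitly so the identification of $\ind_{\mathrm{dbl}}$ with the index of $\widehat\cL$ is airtight.
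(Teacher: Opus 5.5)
Your proposal is correct and follows the paper's proof. You identify the singular set with $\widehat\cU$, count each interior umbilic twice using Proposition~\ref{prop:interior-index} together with the orientation-independence in Definition~\ref{def:Hopf-line-index}, and count each boundary umbilic once via Proposition~\ref{prop:boundary-index}. Your checks that the $\mathscr R_F$-chart is compatible with the double and that $\Gamma_r$ is positively oriented spell out what the paper takes for granted when it writes $\ind_p(\widehat\cL)=\ind_{\mathrm{dbl}}(p)$.
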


\begin{proof}
The singular set is $\widehat\cU$, since $\mathring h_F$ has two distinct
null lines
wherever $\mathring h_F\neq0$.

For $p\in\cU\cap\mathbb D$,
Proposition~\ref{prop:interior-index} supplies an integer $m_p\geq3$ and gives
\begin{equation*}
 \ind_{p_+}(\widehat\cL)
 =\ind_{p_-}(\widehat\cL)
 =-\frac{m_p-2}{2}<0.
\end{equation*}
In a $g_F$-orthonormal frame, $\cL_-$ is obtained from $\cL_+$ by a
rotation through $\pi/2$, which does not change the index. Reversing the
orientation on the second copy also leaves the index unchanged by
Definition~\ref{def:Hopf-line-index}. Therefore the two copies give the
same index.

For $p\in\cU\cap\partial\mathbb D$,
Proposition~\ref{prop:boundary-index} supplies an integer $m_p\geq3$ and
gives the doubled index
\begin{equation*}
 \ind_p(\widehat\cL)=\ind_{\mathrm{dbl}}(p)
 =-\frac{m_p-2}{2}<0.
\end{equation*}

Adding these contributions gives \eqref{eq:doubled-index-sum}, and its
sign follows from $m_p\geq3$.
\end{proof}

\begin{proof}[Proof of Theorem~\ref{thm:main}]
Suppose $\mathring h_F\not\equiv0$.

Apply \eqref{eq:line-field-PH-general} to the line field
$\widehat\cL$ on the oriented double \eqref{eq:oriented-double}. Since
$\widehat\Sigma\simeq\bS^2$, it gives
\begin{equation*}
 \sum_{q\in\Sing(\widehat\cL)}\ind_q(\widehat\cL)
 =\chi(\widehat\Sigma)=\chi(\bS^2)=2.
\end{equation*}
If $\cU=\varnothing$, the index sum is zero; otherwise, it is negative
by \eqref{eq:doubled-index-sum}. Both contradict the value $2$. Hence
\begin{equation*}
 \mathring h_F\equiv0.
\end{equation*}
The boundary condition and transversality show that its image is the
corresponding Wulff cap in $\overline{\mathbb R^3_+}$. This proves
Theorem~\ref{thm:main}.
\end{proof}

\

\appendix

\section{Proof of the interior expansion}
\label{app:interior-expansion}

\begin{proof}[Proof of Lemma~\ref{lem:interior-expansion}]
Without loss of generality, $A(0)=I$. Writing
$b^j=\partial_i a^{ij}$, the equation becomes
\[
 a^{ij}w_{ij}+b^jw_j=0,
 \qquad b^j\in C^{0,\alpha}.
\]
If $w$ is locally constant, then $w\equiv0$ near the origin. Otherwise,
the Hartman--Wintner theorem
\cite[Theorem~(H.--W.) and Remark~1.1, p.~231]{Alessandrini1987}
gives an integer $m\geq1$ and a nonzero homogeneous polynomial $P_m$
satisfying
\[
 a^{ij}(0)(P_m)_{ij}=0
\]
such that
\[
 w=P_m+o(\abs q^m),
 \qquad Dw=DP_m+o(\abs q^{m-1}),
\]
where the little-$o$ terms are uniform for $q/\abs q\in\bS^1$. Since
$A(0)=I$, one has $\Delta P_m=0$. The three vanishing conditions imply
$m\geq3$.

Choose fixed closed annuli
\[
 K\Subset K'\Subset\{1/2<\abs q<2\}
\]
with $\bS^1\subset\operatorname{int}K$, and set
\[
 w_\varepsilon(q)=\varepsilon^{-m}w(\varepsilon q),
 A_\varepsilon(q)=A(\varepsilon q),
 \qquad
 V_\varepsilon=w_\varepsilon-P_m.
\]
Then $V_\varepsilon\to0$ in $C^1(K')$ and
\[
 A_\varepsilon^{ij}(V_\varepsilon)_{ij}
 +\varepsilon b^j(\varepsilon q)(V_\varepsilon)_j
 =f_\varepsilon,
\]
where
\[
 f_\varepsilon
 =-(A_\varepsilon^{ij}-\delta^{ij})(P_m)_{ij}
  -\varepsilon b^j(\varepsilon q)(P_m)_j.
\]
Since $A\in C^{1,\alpha}$ and $A(0)=I$,
\[
 A_\varepsilon\longrightarrow I\quad\text{in }C^{1,\alpha}(K'),
 \qquad
 \|\varepsilon b(\varepsilon\,\cdot)\|_{C^{0,\alpha}(K')}
 +\|f_\varepsilon\|_{C^{0,\alpha}(K')}=O(\varepsilon).
\]
The interior Schauder estimate
\cite[Theorem~6.2]{GilbargTrudinger2001} gives
\[
 \|V_\varepsilon\|_{C^{2,\alpha}(K)}
 \leq C\left(
 \|V_\varepsilon\|_{C^0(K')}
 +\|f_\varepsilon\|_{C^{0,\alpha}(K')}
 \right)\longrightarrow0.
\]
By homogeneity,
\[
 D^2V_\varepsilon(q)
 =\varepsilon^{2-m}
 \bigl(D^2w(\varepsilon q)-D^2P_m(\varepsilon q)\bigr).
\]
Taking $\varepsilon=\abs x$ and $q=x/\abs x$ proves
\eqref{eq:int-exp-2}, uniformly for $q\in\bS^1$.
\end{proof}

\section{Proof of the boundary expansion}
\label{app:boundary-expansion}

\begin{proof}[Proof of Lemma~\ref{lem:boundary-expansion}]
Along $t=0$, set
\[
 \beta(x)=\frac{a^{12}(x,0)}{a^{22}(x,0)},
 \qquad \Psi(x,y)=(x+\beta(x)y,y),
 \qquad v=w\circ\Psi.
\]
Restrict the half-disk so that $\Psi$ is an orientation-preserving
$C^{2,\alpha}$ diffeomorphism preserving $y=0$ and $y\geq0$. In the
$(x,y)$-coordinates, the coefficient matrix is
\[
 B=\abs{\det D\Psi}\,D\Psi^{-1}(A\circ\Psi)D\Psi^{-T}.
\]
Direct multiplication on $y=0$ gives $B^{12}(x,0)=0$.
The conormal condition becomes $e_y\cdot B\nabla v=0$; hence
$v_y(x,0)=0$. The coordinate change preserves
$v(0)=0$, $Dv(0)=0$, and $D^2v(0)=0$.

At the origin, $B(0)$ is diagonal and positive definite. Choose a fixed
positive diagonal linear map $L$, preserving $y=0$ and $y\geq0$, so
that the coefficient matrix in the $\Psi\circ L$-coordinates is a
positive multiple of the identity at the origin. Dividing the matrix by
this positive constant changes neither the divergence equation nor the
homogeneous conormal boundary condition. We may therefore assume
\begin{equation}\label{eq:normalized-boundary-coordinates}
 B(0)=I,\qquad B^{12}(x,0)=0,\qquad v_y(x,0)=0.
\end{equation}

Let $R=\operatorname{diag}(1,-1)$. For $y<0$, define
\[
 v^e(x,y)=v(x,-y),
 \qquad \widehat B(x,y)=RB(x,-y)R,
\]
For $y\geq0$, set $v^e=v$ and $\widehat B=B$. Thus the diagonal
coefficients are extended evenly and the mixed coefficient oddly. Since
$B^{12}(x,0)=0$, the matrix $\widehat B$ is Lipschitz and uniformly
elliptic. Moreover, $v_y=0$ on $y=0$, and tangential differentiation
gives $v_{xy}=0$ . Hence the even extension belongs to
$C^{2,\alpha}$ across the boundary line $y=0$. Splitting a test function
into its upper and lower half-disk parts and using the substitution
$y\mapsto-y$ in the lower part gives
\[
 \Div(\widehat B\nabla v^e)=0
\]
in the weak sense. Equivalently,
\begin{equation*}
 \widehat B^{ij}(v^e)_{ij}
 +\widehat b^j(v^e)_j=0 ,
 \qquad
 \widehat b^j:=\partial_i\widehat B^{ij}\in L^\infty.
\end{equation*}

If $v^e$ is locally constant, then $v^e\equiv v^e(0)=0$, which gives
alternative~\textup{(a)}. Otherwise, the Lipschitz-coefficient
Hartman--Wintner theorem
\cite[Theorem~(H.--W.) and Remark~1.1, p.~231]{Alessandrini1987}
applies. In the normalized coordinates
\eqref{eq:normalized-boundary-coordinates}, there exist an integer
$m\geq1$ and a nonzero homogeneous harmonic polynomial $P_m$ such that
\[
 v^e=P_m+o(\varrho^m),
 \qquad Dv^e=DP_m+o(\varrho^{m-1}).
\]
The evenness of $v^e$ implies that $P_m$ is even in $y$. Hence
\[
 P_m=c\Rea(z^m),\qquad c\neq0.
\]
The vanishing of $v$, $Dv$, and $D^2v$ at the origin implies $m\geq3$.

In the relative topology of $\{y\geq0\}$, choose compact half-annuli
\begin{equation*}
 K\Subset K'\Subset
 \{q=(x,y):1/2<\abs q<2,\ y\geq0\},
\end{equation*}
with the closed upper unit semicircle contained in the relative interior
of $K$. Denote
\[
 v_r(q)=r^{-m}v(rq),
 \qquad B_r(q)=B(rq),
 \qquad V_r=v_r-P_m.
\]
Writing $b^j=\partial_iB^{ij}$, the function $V_r$ satisfies
\[
 B_r^{ij}(V_r)_{ij}+r b^j(rq)(V_r)_j=f_r,
\]
where
\begin{equation*}
 f_r=-(B_r^{ij}-\delta^{ij})(P_m)_{ij}
     -r b^j(rq)(P_m)_j.
\end{equation*}
Since $B\in C^{1,\alpha}$ and $B(0)=I$,
\begin{equation}\label{eq:boundary-scaled-coefficient-bounds}
 \|B_r-I\|_{C^{1,\alpha}(K')}
 +\|r b(r\,\cdot)\|_{C^{0,\alpha}(K')}
 +\|f_r\|_{C^{0,\alpha}(K')}
 \leq Cr.
\end{equation}
On $y=0$, one has $B_r^{12}=0$ and $(P_m)_y=0$. The boundary condition
for $V_r$ is therefore homogeneous:
\begin{equation*}
 B_r^{2j}(V_r)_j=0
 \quad\text{on }K'\cap\{y=0\}.
\end{equation*}
Moreover, $B_r^{22}\geq\lambda>0$, and the $C^{1,\alpha}$ norm of the
boundary vector $(B_r^{21},B_r^{22})$ is bounded independently of $r$.

Cover $K$ by finitely many sets $U_\ell\Subset U'_\ell$, with each
$U'_\ell$ contained in the relative interior of $K'$. Use interior
disks in $\{y>0\}$ and flat half-disks whose flat boundary lies in
$\{y=0\}$. On the interior disks, we apply the interior Schauder estimate
\cite[Theorem~6.2]{GilbargTrudinger2001}. On each flat half-disk,
the boundary condition is homogeneous Neumann, since
$B_r^{12}=0$ and $B_r^{22}>0$ on $y=0$. Hence the boundary
Schauder estimate
\cite[Theorem~6.30]{GilbargTrudinger2001} applies.
Combining these estimates over the finite cover, we obtain
\begin{equation*}
 \|V_r\|_{C^{2,\alpha}(K)}
 \leq C\left(
  \|V_r\|_{C^0(K')}
  +\|f_r\|_{C^{0,\alpha}(K')}
 \right).
\end{equation*}
The Hartman--Wintner expansion gives $V_r\to0$ in $C^1(K')$.
Consequently, by \eqref{eq:boundary-scaled-coefficient-bounds},
\[
 \|V_r\|_{C^{2,\alpha}(K)}\longrightarrow0.
\]
Write \(q=r\omega\), where \(r=|q|>0\) and \(\omega\) lies on the closed upper unit semicircle. Then
\[
D^2v(q)-D^2P_m(q)
=r^{m-2}D^2V_r(\omega)
=o(r^{m-2})
\]
The estimate is uniform in \(\omega\). This proves \eqref{eq:bd-exp-2}
and completes alternative \textup{(b)}.
\end{proof}

\section{Graph equations and ellipticity}
\label{app:graph}

Use the notation of Lemma~\ref{lem:interior-graph-difference}. Direct differentiation gives
\[
 X_1\times X_2
 =\nu(p)+u_1(\zeta\times e_2)+u_2(e_1\times\zeta)
 =V_\zeta(Du),
 \qquad
 \inner{V_\zeta(q)}{\zeta}=\inner{\nu(p)}{\zeta}.
\]
For $\lambda=(\lambda_1,\lambda_2)$, set
\[
 \xi_\lambda
 :=\lambda_1(\zeta\times e_2)+\lambda_2(e_1\times\zeta),
 \qquad
 \nu(q):=\frac{V_\zeta(q)}{\abs{V_\zeta(q)}}.
\]
If $\xi_\lambda^\top$ denotes the Euclidean projection onto
$\nu(q)^\perp$, one-homogeneity gives
\[
 D^2G_\zeta(q)[\lambda,\lambda]
 =\frac1{\abs{V_\zeta(q)}}
  \inner{A_F(\nu(q))\xi_\lambda^\top}{\xi_\lambda^\top}.
\]
Here $\xi_\lambda\perp\zeta$. If $\xi_\lambda^\top=0$, then
$\xi_\lambda$ is parallel to $V_\zeta(q)$. Taking the inner product
with $\zeta$ gives $\xi_\lambda=0$, and hence $\lambda=0$. It follows
from the strict ellipticity of $A_F$ that $D^2G_\zeta(q)$ is positive
definite. Compactness then gives uniform ellipticity on every bounded
subset of $\mathbb R^2$.

The anisotropic area of the graph is
$\int G_\zeta(Du)\dd x_1\dd x_2$. For
$u_\varepsilon=u+\varepsilon\varphi$, differentiation with respect to
$\varepsilon$ gives
\[
 -\int\Div DG_\zeta(Du)\,\varphi\dd x_1\dd x_2.
\]
For this variation,
\[
 \inner{\varphi\zeta}{\nu_u}\dd A
 =\varphi\inner{\zeta}{V_\zeta(Du)}\dd x_1\dd x_2
 =\inner{\zeta}{\nu(p)}\varphi\dd x_1\dd x_2.
\]
With the convention \eqref{eq:HF-def}, comparison of the two first
variations gives \eqref{eq:interior-graph-equations}.

For $W=\rho\xi\neq0$ and $\chi\in\mathbb R^3$, let $\chi^\top$ be the
projection of $\chi$ onto $\xi^\perp$. The differential of
$W\mapsto W/\abs W$ sends $\chi$ to $\chi^\top/\rho$. Differentiating
$\bar\nabla F(\rho\xi)=\Phi(\xi)$ gives
\[
 \bar\nabla^2F(W)[\chi,\chi]
 =\frac1\rho\inner{A_F(\xi)\chi^\top}{\chi^\top}.
\]
Choose $0<\lambda_F\leq\Lambda_F$ such that
\[
 \lambda_F\abs v^2
 \leq\inner{A_F(\xi)v}{v}
 \leq\Lambda_F\abs v^2
 \qquad(\xi\in\mathbb S^2,\ v\perp\xi).
\]

In the notation of \eqref{eq:adapted-graph}, for
$\zeta=(\zeta_1,\zeta_2)$ set
\[
 \xi=-b\zeta_1\tau-\zeta_2E_3,
 \qquad \rho(q)=\abs{V(q)},
 \qquad \nu(q)=\frac{V(q)}{\rho(q)}.
\]
The chain rule gives \eqref{eq:G-Hessian-exact}. Moreover,
\[
 \xi\perp\bar\nu,
 \qquad \inner{V(q)}{\bar\nu}=b,
 \qquad \rho(q)\geq b.
\]
Writing $V=b\bar\nu+V^\perp$, where
$V^\perp\perp\bar\nu$, gives
\[
 \abs{\xi^\top}^2
 =\abs\xi^2-\frac{\inner{\xi}{V}^2}{\rho^2}
 \geq\frac{b^2}{\rho^2}\abs\xi^2,
\]
while
\[
 \min\{b^2,1\}\abs\zeta^2
 \leq\abs\xi^2=b^2\zeta_1^2+\zeta_2^2
 \leq\max\{b^2,1\}\abs\zeta^2.
\]
Consequently, on $\abs q\leq M$ one may take
\[
 \lambda_M
 =\frac{\lambda_Fb^2\min\{b^2,1\}}
 {\bigl(\sup_{\abs q\leq M}\rho(q)\bigr)^3},
 \qquad
 \Lambda_M=\frac{\Lambda_F\max\{b^2,1\}}b,
\]
which proves \eqref{eq:G-uniform-ellipticity}.

Finally, by \eqref{eq:oblique-graph-normal},
\[
 V(Du)=X_s\times X_t=\nu(p)-bu_1\tau-u_2E_3,
 \qquad
 \nu_u=\frac{V(Du)}{\abs{V(Du)}}.
\]
One-homogeneity gives
\[
 F(\nu_u)\dd A=F(V(Du))\dd s\dd t=G(Du)\dd s\dd t.
\]
For the variation $u_\varepsilon=u+\varepsilon\varphi$, differentiation
with respect to $\varepsilon$ gives
\[
 \left.\frac{d}{d\varepsilon}\right|_{0}
 \int G(Du_\varepsilon)\dd s\dd t
 =-\int\Div(DG(Du))\varphi\dd s\dd t
  +\int_{\partial\Omega}DG(Du)\cdot n_\Omega\,\varphi\dd s.
\]
Since $\inner{V(Du)}{\bar\nu}=b$, the first variation formula for
anisotropic area gives
\[
 \int H_F\inner{\varphi\bar\nu}{\nu_u}\dd A
 =b\int H_F\varphi\dd s\dd t.
\]
Thus $\Div(DG(Du))=-bH_F$. Along $q_2=0$, the graph direction
$\bar\nu$ lies in $\partial\mathbb R^3_+$ and
\[
 G_{q_2}(Du)
 =-\bar\nabla F(V(Du))\cdot E_3
 =\inner{\nu_F}{-E_3}.
\]
Hence the Young condition is $G_{q_2}(Du)=\omega_0$.

\end{document}